\documentclass[a4paper,12pt]{article}
\usepackage{amsmath,amssymb,amsthm,cite,verbatim,xcolor,tikz-cd}
\usepackage[bookmarks=true]{hyperref}
\usepackage{graphicx}
\topmargin -20truemm \setlength{\textheight}{250mm} \textwidth 170mm
\oddsidemargin -5truemm

\title{Infinitely many cubic points for $X_0^+(N)$ over $\mathbb{Q}$}
% \title[Title]{Title}

%\author[F. Bars] {Francesc Bars} \address{$\bullet$\,\,Francesc Bars Cortina}
%\address{Departament Matem\`atiques, Edif. C, Universitat Aut\`onoma de Barcelona\\
%08193 Bellaterra, Catalonia} \email{francesc@mat.uab.cat}
%\author[J.Gonz\'alez]{Josep Gonz\'alez} \address{$\bullet$\,\, Josep Gonz\'alez}\address{Departament de Matem\`atica Aplicada 4,
%Universitat Polit\`ecnica de Catalunya EPSEVG,\\ Avinguda V\'{\i}ctor Balaguer 1, 08800 Vilanova i la Geltr\'u, Spain}
%\email{josep.gonzalez@upc.edu}

\author{ \ Francesc Bars\footnote{First author is supported by PID2020-116542GB-I00} and Tarun  Dalal }
%\footnote{The  second author is***.}}
%\newline \emph{Keywords}: \,modular curve, non-split Cartan subgroup.
%\newline 2010 \emph{Mathematics Subject Classification}: 14G35, 14H37.}}
\newcounter{teo}
\newtheorem{prop}[teo]{Proposition}%[section]
%[section]
%[section]
\newtheorem{lema}[teo]{Lemma}
\newtheorem{thm}[teo]{Theorem}%[section]
\newtheorem{cor}[teo]{Corollary}%[section]
%[section]

\theoremstyle{definition}

\theoremstyle{remark}

\numberwithin{equation}{section}

\newcommand{\Q}{\mathbb{Q}}

\newcommand{\N}{\mathbb{N}}

\newcommand{\F}{\mathbb{F}}

\date{}
%\begin{document}
%\maketitle

%\title{Modular Curve $X_0^+(N)$ with infinitely many cubic points over $\mathbb{Q}$}
% \title[Title]{Title}

%\author[F.Bars]{Francesc Bars}
% \address[F.Bars]{Departament de Matem\`atiques, Universitat Aut\`onoma de Barcelona, 08293 Bellaterra (Barcelona), CATALONIA.}
% \email{francesc@mat.uab.cat}
%
% \author[T. Dalal]{Tarun Dalal}
% \address[T. Dalal]{Department of Mathematics, Indian Institute of Technology Hyderabad, Kandi, Sangareddy 502285, INDIA.}
% \email{ma17resch11005@iith.ac.in}

%\keywords{Hilbert modular forms, Fourier coefficients, Sign changes, Non-vanishing}
%\subjclass[2010]{Primary 11F03, 11F11, 11F30; Secondary 11F41}
%\date{\today}

%opening
% \title{}
%\author{}

\begin{document}

\maketitle

\begin{abstract}
We determine all modular curves $X_0^+(N)$ that admit infinitely
many cubic points over the rational field $\mathbb{Q}$.
\end{abstract}

\section{Introduction}

A non-singular smooth curve $C$ over a number field $K$ of genus
${g_C>1}$ always has a finite set of $K$-rational points $C(K)$ by a
celebrated result of Faltings (here we fix once and for all
$\overline{K}$, an algebraic closure of $K$). We consider the set of
all points of degree at most $d$ for $C$ by
$\Gamma_d(C,K)=\cup_{[L:K]\leq d}C(L)$ and exact degree $d$ by
$\Gamma_d'(C,K)=\cup_{[L:K]=d}C(L)$, where $L\subseteq \overline{K}$
runs over the finite extensions of $K$. A point $P\in C$ is said to
be a point of degree $d$ over $K$  if $[K(P):K]=d.$

The set $\Gamma_d(C,M)$ is infinite for a certain $M/K$ finite
extension if $C$ admits a degree at most $d$ map, {{all
defined over $M$}}, to a projective line or an elliptic curve
{ with positive $M$-rank}. The converse is true for
$d=2$ \cite{HaSi91}, $d=3$ \cite{AH91} and $d=4$ under certain
restrictions \cite{AH91}\cite{DF93}. If we fix the number field $M$
in the above results (i.e. an arithmetic statement for
$\Gamma_d(C,M)$ with $M$ fix), {{we need a precise
understanding over $M$}} of the set $W_d(C)=\{v\in
Pic^d(C)|h^0(C,\mathcal{L}_v)>0\}$ where $Pic^d$ is the usual
$d$-Picard group and $\mathcal{L}_v$ the line bundle of degree $d$
on $C$ associated to $v$. {{If $W_d(C)$} contains no
translates of abelian subvarieties with positive $M$-rank of
$Pic^d(C)$ then $\Gamma_d'(C,M)$ is finite, (under the assumption
that $C$ admits no maps of degree at most $d$ to a projective line
over $M$)}.

For $d=2$ the arithmetic statement {{for
$\Gamma_d(C,K)$}} follows from \cite{AH91}, (for a sketch of the
proof and the precise statement see \cite[Theorem 2.14]{BaMomose}).

For $d=3$, Daeyeol Jeon introduced an arithmetic statement and its
proof in \cite{Jeo21} following \cite{AH91} and \cite{DF93}. In
particular if $g_C\geq 3$ and $C$ has no degree 3 or 2 map to a
projective line and no degree 2 map to an elliptic curve over
$\overline{K}$  then the set of exact cubic points of $C$ over $K$,
$\Gamma_3'(C,K)$, is an infinite set if and only if $C$ admits a
degree three map to an elliptic curve over $K$ with positive
$K$-rank.

Observe that if {{$g_C\leq 1$ (with $C(K)\neq \emptyset$
for $g_C=1$)}}, then $C$ has a degree three map over $K$ to the
projective line, thus $\Gamma_3'(C,K)$ is always an infinite set.
Thus for curves $C$ with $C(K)\neq \emptyset$ we restrict to
$g_{C}\geq 2$ in order to study the finiteness of the set
$\Gamma_d'(C,K)$. { The values of $N$ for which $X_0^+(N)$ has genus $0$ and $1$ are listed in Theorem \ref{known values of N}.}

Jeon in \cite{Jeo21} determined the finite set of modular curves
$X_0(N)$ where $\Gamma_3'(X_0(N),\mathbb{Q})$ is infinite.

In this paper, we deal with determining such question for $X_0^+(N)$
corresponding to the modular curve $X_0(N)$ quotient by the
Atkin-Lehner involution $w_N$, recall that it is an algebraic curve
defined over $\mathbb{Q}$ and $X_0^+(N)(\mathbb{Q})\neq\emptyset$,
because it has a rational cusp. The novelty with respect to previous works on degree $2$ and $3$ maps to an elliptic curve $E$ is by considering the cover $\Q(X_0(N))/\Q(E)$ taking into account the action of Atkin-Lehner involution.

The main theorem of the article is the following.

\begin{thm} Suppose $g_{X_0^+(N)}\geq 2$. Then
$\Gamma_3'(X_0^+(N),\mathbb{Q})$ is infinite if and only if
$g_{X_0^+(N)}=2$ or $N$ is in the following list:

\begin{center}
\begin{tabular}{|c|l|}
\hline $g_{X_0^+(N)}$&$N$\\
\hline 3&58,76,86,96,97,99,100,109,113,127,128,139,149,151,169,179,239 \\

\hline 4&88,92,93,115,116,129,137,155,159,215\\
\hline 5&122,146,181, 185, 227\\
\hline 6&124,163, 164, 269\\
\hline 7& 196, 243\\ \hline \hline 10&236\\
\hline
\end{tabular}
\end{center}

\end{thm}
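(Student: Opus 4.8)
The plan is to combine the arithmetic criterion of Jeon (recalled in the introduction) with a case-by-case analysis organized by the genus of $X_0^+(N)$. First I would dispose of the ``if'' direction. When $g_{X_0^+(N)}=2$ the curve is hyperelliptic over $\Q$ (a genus-$2$ curve with a rational point), so it carries a degree-$2$ map to $\mathbb{P}^1$ over $\Q$; composing with a degree-$3$ rational function... no, rather one observes directly that a hyperelliptic genus-$2$ curve with a rational Weierstrass point, or more simply any genus-$2$ curve over $\Q$ with $X_0^+(N)(\Q)\neq\emptyset$, admits a degree-$3$ map to $\mathbb{P}^1$ over $\Q$ (pull back the $g^1_2$ and add a base point at a rational point), hence $\Gamma_3'$ is infinite. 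For each $N$ in the displayed table with $g_{X_0^+(N)}\geq 3$, I would exhibit an explicit degree-$3$ morphism $\varphi\colon X_0^+(N)\to E$ defined over $\Q$ to an elliptic curve $E/\Q$ of positive rank; the existence of such a map together with the fact that $E(\Q)$ is infinite forces $\Gamma_3'(X_0^+(N),\Q)$ to be infinite. Concretely I expect these maps to come from intermediate curves: a $\Q$-rational quotient $X_0^+(N)\to E$ arising when $\Jac(X_0^+(N))$ has an elliptic factor (a newform quotient of analytic/algebraic rank $1$), or a trigonal pencil, and positivity of the rank can be read off from tables (LMFDB / modular symbols) for the finitely many listed $N$.

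The ``only if'' direction is the substantive part. Here I would first invoke the genus-$0$ and $1$ classification (Theorem \ref{known values of N}) to reduce to $g_{X_0^+(N)}\geq 2$, and then discard $g=2$ (already handled). For $g_{X_0^+(N)}\geq 3$ not in the table, the goal is to show $\Gamma_3'(X_0^+(N),\Q)$ is finite, which by Jeon's criterion amounts to checking three things over $\Qbar$: (a) $X_0^+(N)$ has no degree-$2$ or degree-$3$ map to $\mathbb{P}^1$ (i.e.\ is not hyperelliptic and not trigonal — equivalently the Clifford index / gonality is $\geq 4$, or gonality exactly $3$ is ruled out); (b) $X_0^+(N)$ has no degree-$2$ map to an elliptic curve (it is not bielliptic over $\Qbar$); and then, the arithmetic input, (c) $X_0^+(N)$ admits no degree-$3$ map to an elliptic curve of positive rank \emph{over $\Q$}. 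Steps (a) and (b) are geometric and can be attacked with standard tools: known gonality bounds for $X_0^+(N)$ (Abramovich-style lower bounds via the first eigenvalue of the Laplacian, or Ogg/Hasegawa–Hashimoto–type results), together with explicit lists of bielliptic and trigonal $X_0^+(N)$ already in the literature; one must be careful that these be genuinely over $\Qbar$, not just over $\Q$.

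For step (c), once $X_0^+(N)$ is neither hyperelliptic, trigonal, nor bielliptic over $\Qbar$, Jeon's theorem says $\Gamma_3'$ is infinite \emph{iff} there is a degree-$3$ map to an elliptic curve over $\Q$ of positive $\Q$-rank. Such a map is automatically a quotient by a free $\Z/3$ or comes from a fibration; in any case it induces, by Jacobian functoriality, an elliptic curve $E/\Q$ that is a quotient (up to isogeny) of $\Jac(X_0^+(N))$, hence $E$ is a modular elliptic curve of conductor dividing $N$ and appearing with $w_N$-eigenvalue $+1$. For each remaining $N$ I would (i) list the elliptic factors of $\Jac(X_0^+(N))$ (newforms of level dividing $N$, with the right Atkin–Lehner sign), (ii) for each such $E$ check whether a degree-$3$ $\Q$-morphism $X_0^+(N)\to E$ can exist — this is where the ``novelty'' mentioned in the introduction enters: analyze the cover $\Q(X_0(N))/\Q(E)$ and the induced action of $w_N$, ruling out degree $3$ on parity/ramification or Galois-theoretic grounds in most cases — and (iii) whenever such a map does exist, compute $\mathrm{rank}\,E(\Q)$ and confirm it is $0$ for the $N$ \emph{not} in the table (so $W_3$ contains no positive-rank abelian subvariety translate and $\Gamma_3'$ is finite). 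The curves of genus $2$ being automatically in the ``infinite'' case, and the genus-$\geq 3$ table entries being exactly those $N$ where a positive-rank degree-$3$ elliptic quotient over $\Q$ survives, this completes the dichotomy.

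I expect the main obstacle to be step (c), specifically proving the \emph{non-existence} of degree-$3$ maps $X_0^+(N)\to E$ over $\Q$ for the (possibly many) $N$ with $g_{X_0^+(N)}\geq 3$ where $\Jac(X_0^+(N))$ does have a positive-rank elliptic factor but no such map geometrically exists: one needs a clean criterion — presumably via the gonality/Castelnuovo–Severi inequality bounding the degree of maps to curves of lower genus, or via an explicit model of $X_0^+(N)$ and its function field — to exclude degree-$3$ covers, and this has to be done uniformly rather than curve-by-curve for the analysis to be complete. A secondary difficulty is ensuring the gonality and bielliptic classifications in steps (a)–(b) hold over $\Qbar$ and not merely over $\Q$, since a curve can acquire lower gonality or a bielliptic structure only after base change; for the finitely many relevant $N$ this is checkable but requires care.
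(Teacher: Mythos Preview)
Your outline has the right skeleton but contains two genuine gaps.

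First, the genus-$2$ argument is wrong as stated. Adding a rational base point $P$ to the hyperelliptic $g^1_2$ gives the linear system $|K+P|$, but by Riemann--Roch $\ell(K+P)=\ell(K)=2$, so $P$ is a base point and the associated map is still the degree-$2$ hyperelliptic map, not a degree-$3$ map. The paper does \emph{not} have a uniform argument here: for most genus-$2$ levels it quotes the criterion of Jeon--Kim--Schweizer requiring at least three rational points, and then treats $N=62,87,98$ individually (for $98$ by exhibiting an explicit rational effective divisor of degree $3$ not of the form $K+Q$).

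Second, and more seriously, your ``only if'' direction is structured as: for $N$ not in the table, verify (a) gonality $\geq 4$ over $\Qbar$, (b) not bielliptic over $\Qbar$, then (c) apply Jeon. But there are many $N$ with $g_{X_0^+(N)}\geq 3$ \emph{not} in the table for which (a) or (b) fails, so Jeon's lemma does not apply and your plan says nothing. Concretely: $N=60,66,85,94,104$ are hyperelliptic of genus $\geq 3$; $N=70,82,84,90,108,117,135,147,161,173,199,251,311$ are trigonal over $\Qbar$; $N=78,105,110,118,120,123,136,141,142,144,145,171,176,188$ are bielliptic --- none of these are in the table, yet each needs a separate argument. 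The paper handles them in three distinct ways: for hyperelliptic and bielliptic cases it analyses $W_3$ directly (checking whether any elliptic factor of the Jacobian has positive rank and whether a degree-$3$ map to it can exist); for the trigonal cases it determines, via the Petri model in $\mathbb{P}^3$ and the arithmetic of the ambient quadric, exactly which $g^1_3$'s are defined over $\Q$ --- this is where $N=88,93,115,\ldots$ land in the table while $N=70,82,\ldots$ do not. This field-of-definition analysis for the trigonal map is a substantial computation (Appendix A of the paper) that your outline does not anticipate. Finally, the paper's mechanism for ruling out degree-$3$ maps to elliptic curves is more specific than ``parity/ramification'': it shows that if $w_d$ (for suitable $d\mid N$, $d\neq N$) acts as $+1$ on $f_E$ and $E$ has no rational $2$-torsion, then any $\varphi\colon X_0^+(N)\to E$ must factor through $X_0^+(N)/\langle w_d\rangle$, contradicting odd degree.
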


All computations sources used in the paper are available at
\begin{verbatim}
 https://github.com/Tarundalalmath/X_0-N-with-infinitely-many-cubic-points
\end{verbatim}
except the ones for counting points over finite fields, where we use
modified versions for $X_0^+(N)$ of the ones already available at
different links in \cite{BaGon}.

\section{General considerations}

Given a complete curve $C$ over $K$, the gonality of $C$ is defined as follows:
$$\rm{Gon}(C):=min\{\deg(\varphi)\mid \varphi: C\rightarrow \mathbb{P}^1 \mathrm{defined \ over} \ \overline{K}\}.$$
%the
%minimal degree map to a projective line in $\overline{K}$ and such
%natural number is denoted by $\rm{Gon}(C)$.

By~\cite[Lemma 1.2]{Jeo21} (and arguments there):

{
\begin{lema}\label{N not in known set the it has map to elliptic curve}
Suppose $C$ has $Gon(C)\geq 4$, { $P\in C(K)$} and does not have a degree $\leq 2$ map to
an elliptic curve. If $\Gamma_3'(C,K)$ is an infinite set then $C$ admits a $K$-rational map of degree $3$ to an elliptic
curve with positive $K$-rank.
%
%{\color{red} Then the followings are equivalent
%\begin{enumerate}
%\item $\Gamma_3'(C,K)$ is an infinite set,
%\item $W_3(C)$ contains a
%translation of an elliptic curve with positive $K$-rank,
%\item $C$ admits a $K$-rational map of degree $3$ to an elliptic
%curve with positive $K$-rank.
%\end{enumerate}
% }
% \begin{proof}
% Although the proof is exactly similar to \cite[Lemma 1.2]{Jeo21}, for completeness we are giving the proof.
% Consider the mapping $\iota : C^{(3)}\rightarrow Jac(C)$ defined by, $\iota(P_1,P_2,P_3)=[P_1+P_2+P_3-3P]$, where $Jac(C)$ denotes the Jacobian of the curve $C$. Since $P\in C(K)$, the mapping $\iota$ is defined over $K$. Let $W_3(C):=\iota(C^{(3)})$. Since $\rm(Gon)(C)\geq 4$, the mapping $\iota$ is injective and we have $C^{(3)}$
% \end{proof}
%
%Then $\Gamma_3'(C,K)$ is an infinite set if and
%only if $C$ admits a $K$-rational map of degree $3$ to an elliptic
%curve with positive $K$-rank, if and only if $W_3(C)$ contains a
%translation of an elliptic curve with positive $K$-rank.
\end{lema}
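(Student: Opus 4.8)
The plan is to convert the arithmetic hypothesis into a statement about the variety $W_3(C)$ of effective divisor classes, apply Faltings' theorem on subvarieties of abelian varieties, and then invoke the geometric classification of abelian subvarieties of $W_3(C)$ from \cite{AH91}, \cite{DF93} and \cite[Lemma 1.2]{Jeo21}. First we reduce ``$\Gamma_3'(C,K)$ infinite'' to ``$W_3(C)(K)$ infinite''. Since $\mathrm{Gon}(C)\geq 4$, the curve $C$ has no $g^1_3$, so every effective degree-$3$ divisor has $h^0=1$ and the Abel--Jacobi morphism $C^{(3)}\to\Pic^3(C)$ is a closed immersion onto $W_3(C)$; fixing the base point $P\in C(K)$ we obtain an isomorphism of $K$-varieties $C^{(3)}\xrightarrow{\ \sim\ }W_3(C)$. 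A cubic point of $C$ over $K$ is a Galois orbit $\{x,x^\sigma,x^{\sigma^2}\}$, and $x+x^\sigma+x^{\sigma^2}$ is an \emph{indecomposable} $K$-rational effective divisor of degree $3$ (not of the form $Q+R$ with $Q\in C(K)$ and $R\in C^{(2)}(K)$), distinct cubic points giving distinct such divisors; conversely the decomposable $K$-points of $C^{(3)}$ all lie in the image of $C(K)\times C^{(2)}(K)$ under addition. As $C(K)$ is finite (Faltings, $g_C\geq 2$) and $C^{(2)}(K)$ is finite — by the degree-$2$ analysis (\cite{HaSi91}; cf.\ \cite[Theorem 2.14]{BaMomose}), since $\mathrm{Gon}(C)\geq 4$ and the hypothesis forbid $C$ from being hyperelliptic or bielliptic — only finitely many $K$-points of $C^{(3)}$ are decomposable, so infinitude of $\Gamma_3'(C,K)$ forces infinitude of $W_3(C)(K)$.

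Next we would apply Faltings' theorem to the infinite set $W_3(C)(K)\subseteq\Pic^3(C)$: its Zariski closure is a finite union of translates of abelian subvarieties of $\Jac(C)$ contained in $W_3(C)$, and one of them, $v+A$, is positive-dimensional. Being $\Gal(\overline K/K)$-stable (it carries a Zariski-dense set of $K$-points), $v+A$ is defined over $K$, with $A$ a $K$-abelian subvariety of positive $K$-rank; and after discarding the finitely many decomposable points we may assume its generic point is a cubic point. Now comes the essential geometric input, which is exactly \cite[Lemma 1.2]{Jeo21} resting on \cite{AH91} and \cite{DF93}: under $\mathrm{Gon}(C)\geq 4$ one must have $\dim A=1$. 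We would argue that $\dim A=3$ is impossible, because then $W_3(C)=v+A$ by irreducibility and equidimensionality, forcing $C=W_1(C)$ into a translate of $A$ and hence $\Jac(C)=\langle C-C\rangle\subseteq A$, contrary to $g_C\geq 5$ (a curve of gonality $\geq 4$ has genus $\geq 5$); and $\dim A=2$ is excluded by the Castelnuovo-type bounds of \cite{AH91}, \cite{DF93}, once degree-$\leq 2$ maps to curves of genus $\leq 2$ (a $g^1_2$, a bielliptic structure, or a degree-$\leq 2$ map to a genus-$2$ curve) are ruled out by our hypotheses.

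With $\dim A=1$, the family $\{D_t\}_{t\in v+A}$ of degree-$3$ divisors has no base point, since a base point would place a translate of $A$ inside $W_2(C)$, impossible because $C$ is not bielliptic; hence the incidence correspondence $\{(p,t)\in C\times(v+A):p\in D_t\}$, after normalization, yields a non-constant $K$-morphism $f\colon C\to E'$ onto a smooth curve $E'$ with $\Jac(E')$ isogenous over $K$ to $A$. Since $\mathrm{Gon}(C)\geq 4$ forbids $g(E')=0$, the curve $E'$ has genus $1$, and degrees $1$ and $2$ for $f$ are excluded (by $g_C\geq 5$ and by the hypothesis, respectively), so $\deg f=3$. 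Finally $f(P)\in E'(K)$ makes $E'$ an elliptic curve over $K$, and since isogenous abelian varieties over a number field have the same Mordell--Weil rank, $\mathrm{rank}\,E'(K)=\mathrm{rank}\,A(K)>0$, which is the desired conclusion.

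The hard part will be the geometric classification in the second paragraph: excluding $\dim A\in\{2,3\}$ and proving that a one-dimensional translate in $W_3(C)$ genuinely arises from a degree-$3$ morphism rather than from a more complicated correspondence. This is precisely the content of \cite[Lemma 1.2]{Jeo21}, drawing on \cite{AH91} and \cite{DF93}; everything else is the standard dictionary between degree-$d$ points, symmetric powers and $W_d(C)$, together with Faltings' theorem and isogeny-invariance of the rank.
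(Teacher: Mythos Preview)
Your proposal is correct and follows exactly the route the paper takes---the paper's entire proof is the one-line citation ``By \cite[Lemma 1.2]{Jeo21} (and arguments there)'', and you have faithfully unpacked those arguments (Faltings on $W_3(C)$, the Abramovich--Harris/Debarre--Fahlaoui classification of abelian subvarieties of $W_3$, and the passage from a one-dimensional translate to a degree-$3$ cover over $K$). One small imprecision worth noting: the stated hypotheses do not literally exclude a degree-$2$ map to a genus-$2$ curve, so your parenthetical in the $\dim A=2$ step overstates what is assumed; but since you correctly defer that case to \cite{AH91}, \cite{DF93} and \cite[Lemma~1.2]{Jeo21} anyway, this does not affect the validity of your argument.
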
}

{The modular curves $X_0^+(N)$ for which Lemma
\ref{N not in known set the it has map to elliptic curve} is not applicable are quoted
in the next result}, corresponding to the works \cite{MaHa99},
\cite{D18} and \cite{HS99} (the list with $g_{X_0^+(N)}\leq 1$ is
well-known and follows easily from \cite[Appendix]{BaGon}).

\begin{thm}\label{known values of N}
\begin{enumerate}
\item The modular curve $X_0^+(N)$ has $g_{X_0^+(N)}=0$ if and only if $N$ is one of the following:

1-21, 23-27, 29, 31, 32, 35, 36, 39, 41, 47, 49, 50, 59, 71.

\item $X_0^+(N)$ is an elliptic curve (equivalently $g_{X_0^+(N)}=1$) if and only if $N$ is one of the following:

22, 28, 30, 33, 34, 37, 38, 40, 43, 44, 45, 48, 51, 53-56, 61,
63-65, 75, 79, 81, 83, 89, 95, 101, 119, 131.

\item (Furumoto-Hasegawa) $X_0^+(N)$ is hyperelliptic if and only if $N$ is one of the following:

42, 46, 52, 57, 60, 62, 66-69, 72-74, 77, 80, 85, 87, 91, 92, 94,
98, 103, 104, 107, 111, 121, 125, 143, 167, 191.

\item (Jeon) $X_0^+(N)$ is bielliptic,i.e. has a degree two map to an elliptic curve, if and only if $N$ is one of the following:

42, 52, 57, 58, 60, 66, 68, 70, 72, 74, 76-78, 80, 82, 84-86, 88,
90, 91, 96, 98-100, 104, 105, 108, 110, 111, 117, 118, 120, 121,
123, 124, 128, 135, 136, 141-145, 155, 159, 171, 176, 188.

\item (Hasegawa-Shimura) $\mathrm{Gon}(X_0^+(N))=3$ if and only if $N$ is one of the following:

%58, 76, 86, 96, 97, 99, 100, 109, 113, 127, 128, 139, 149, 151, 169,
%179, 239, 70, 82, 84, 88, 90, 93, 108, 115, 116, 117, 129, 135, 137,
%147, 155, 159, 161, 173, 199, 215, 251, 311, 122, 146, 181, 227,
%164, 162.
58, 70, 76, 82, 84, 86, 88, 90, 93, 96, 97, 99, 100, 108, 109, 113, 115, 116, 117, 122, 127, 128, 129, 135, 137, 139, 146, 147, 149, 151, 155, 159, 161, 162, 164, 169, 173, 179, 181, 199, 215, 227, 239, 251, 311.

\end{enumerate}
\end{thm}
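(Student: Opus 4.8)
My plan is to read this statement as a compilation: I will verify parts (i) and (ii) by a direct genus computation, and I will obtain parts (iii)--(v) from the literature, in each case adding a brief remark on why the relevant list is finite and effectively determined.

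For (i) and (ii), write $\pi\colon X_0(N)\to X_0^+(N)$ for the degree-two quotient by $w_N$ and let $\nu(N)$ be the number of fixed points of $w_N$ on $X_0(N)$, equivalently the number of branch points of $\pi$. Riemann--Hurwitz applied to $\pi$ gives
\[
g_{X_0^+(N)}=\frac{2g_{X_0(N)}+2-\nu(N)}{4}.
\]
Here $g_{X_0(N)}$ is the classical expression in terms of the index $[\SL_2(\mathbb{Z}):\Gamma_0(N)]$, the number of elliptic points and the number of cusps, while $\nu(N)$ has an explicit closed form in terms of (Hurwitz) class numbers of the imaginary quadratic orders of discriminant dividing $-4N$. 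Since $g_{X_0(N)}$ grows linearly in the index whereas $\nu(N)=O(N^{1/2}\log N)$, the condition $g_{X_0^+(N)}\le 1$ forces $N$ into an explicit finite set, and evaluating the displayed formula over that set (the computation recorded in \cite[Appendix]{BaGon}) produces the two lists.

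For (iii), (iv) and (v) I would quote the sources directly: the hyperelliptic curves $X_0^+(N)$ were classified by Furumoto--Hasegawa \cite{MaHa99}, the bielliptic ones by Jeon \cite{D18}, and those of gonality exactly $3$ by Hasegawa--Shimura \cite{HS99}. The mechanism common to all three is worth recording. A curve carrying a $g^1_2$, a degree-two map to an elliptic curve, or a $g^1_3$ has $\overline{\mathbb{Q}}$-gonality at most $4$; since composing a map to $\mathbb{P}^1$ with $\pi$ at most doubles its degree, this gives $\mathrm{Gon}(X_0(N))\le 8$, and Abramovich's lower bound, linear in the index, then confines $N$ to an explicit finite list. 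The remaining candidates are eliminated by reduction modulo small primes---a curve over $\mathbb{Q}$ admitting a low-degree map to $\mathbb{P}^1$ or to an elliptic curve acquires, over some $\mathbb{F}_{p^k}$ with $k$ small, more points than the Weil bound allows on $X_0^+(N)$---and the genuine cases are confirmed from explicit models (and, for the bielliptic case, from the elliptic factors of $J_0(N)$ fixed by $w_N$).

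I do not anticipate a genuine obstacle, since each item is a published theorem; the only delicate point, were one to seek a self-contained proof, would be the effectiveness of this last step for (iii)--(v): one must make sure the mod-$p$ sieve together with the explicit-model verification really resolves every borderline $N$ and, in particular, separates curves that are genuinely hyperelliptic, bielliptic or trigonal from curves whose point counts merely happen to be numerically compatible with it.
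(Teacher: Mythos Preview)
Your proposal is correct and matches the paper's treatment: the paper does not prove this theorem but simply quotes it as a compilation of known results, citing \cite{MaHa99}, \cite{D18}, \cite{HS99} for (iii)--(v) and noting that (i)--(ii) are well known and follow from \cite[Appendix]{BaGon}. Your sketch goes somewhat further than the paper by outlining the Riemann--Hurwitz computation for (i)--(ii) and the Abramovich-bound-plus-sieve mechanism behind (iii)--(v), but this is expository elaboration of the same cited sources rather than a different approach.
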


We say that a pair $(N,E)$ where $N$ is a natural number and $E$ is an
elliptic curve over $\mathbb{Q}$ with positive $\mathbb{Q}$-rank is
admissible if there is a degree 3 map over $\mathbb{Q}$ of the form
$X_0^+(N)\rightarrow E$.

{ The following lemma gives a criterion to rule out the pairs which are not admissible.}

\begin{lema}\label{sieve}
 If $(N,E)$ is an admissible pair, then:
\begin{enumerate}
\item $E$ has conductor $M$ with $M|N$ and for any prime $p\nmid N$ we have
$|\overline{X}_0^+(N)(\mathbb{F}_{p^n})|\leq 3|\overline{E}(\mathbb{F}_{p^n})|$ and
$|\overline{X}_0(N)(\mathbb{F}_{p^n})|\leq 6|\overline{E}(\mathbb{F}_{p^n})|$, $\forall n\in \N$.
%, for all $n\geq 1$ natural number.
%\item $E$ has conductor $M$ with $M|N$, and $\mathbb{Q}$-rank of $E$ is at least 1.
\item if conductor of $E$ is $N$, then the degree of the strong Weil
parametrization of $E$ divides 6.
\item \label{iii} for any prime $p\nmid N$ we have
$\frac{p-1}{12}\psi(N)+2^{\omega(N)}\leq 6 (p+1)^2$, where
$\omega(N)$ is the number of prime divisors of $N$ and {$\psi=N \prod_{\substack{q|N\\ q \ prime}}(1+1/q)$} is the
$\psi$-Dedekind function,

\item for any $w_r$ Atkin-Lehner involution of $X_0(N)$ with $r\neq
N$ we have $g_{X_0^+(N)}\leq 3+2\cdot g_{X_0^+(N)/w_r}+2$.

\end{enumerate}
\end{lema}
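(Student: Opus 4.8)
The plan is to prove each of the four assertions by combining the standard dictionary between maps of curves, Jacobians, and newforms with elementary point counts over finite fields.

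\medskip

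\textbf{Part (1).} Suppose $\varphi\colon X_0^+(N)\to E$ is a degree $3$ map over $\Q$. Composing with the natural projection $\pi\colon X_0(N)\to X_0^+(N)$ of degree $2$ gives a degree $6$ map $X_0(N)\to E$ over $\Q$, so $E$ is a $\Q$-isogeny factor of $\Jac(X_0(N))$ and hence, by Wiles et al., corresponds to a newform of level $M\mid N$; thus $\cond(E)=M\mid N$. For the point counts, first I would choose a prime $p\nmid N$ so that $X_0(N)$, $X_0^+(N)$ and $E$ all have good reduction at $p$; the maps $\varphi$ and $\varphi\circ\pi$ then reduce to maps of smooth curves over $\F_p$ of degrees $3$ and $6$ respectively. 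A degree $d$ morphism $C\to D$ of smooth projective curves over a finite field $\F_q$ is surjective on $\F_q$-points onto $D(\F_q)$ with each fiber of size $\le d$, hence $|C(\F_q)|\le d\,|D(\F_q)|$; applying this with $q=p^n$ to both maps yields the two displayed inequalities for all $n$.

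\medskip

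\textbf{Part (2).} If $\cond(E)=N$, then $E$ is an \emph{optimal (strong Weil) quotient} attached to a rational newform of level $N$, and the strong Weil parametrization is the optimal map $\phi_E\colon X_0(N)\to E$ of degree $\deg\phi_E$. The composite $X_0(N)\xrightarrow{\pi} X_0^+(N)\xrightarrow{\varphi} E$ is a degree $6$ map $X_0(N)\to E$ over $\Q$, and since $E$ is \emph{optimal}, every nonconstant $\Q$-morphism $X_0(N)\to E$ factors through $\phi_E$ (up to translation), so $\deg\phi_E$ divides $6$.

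\medskip

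\textbf{Part (3).} Here I would feed into Part (1) the classical formula for $|X_0(N)(\F_p)|$ modulo the contribution bounded by the genus. Writing $g=g_{X_0(N)}$ and using Weil's bound, $|X_0(N)(\F_p)|\ge (p+1)-2g\sqrt p$, but the cleaner route is: the number of $\F_p$-points on the coarse modular curve, for $p\nmid N$, is $\ge$ (number of supersingular points) $+$ (number of ordinary points coming from elliptic curves over $\F_p$ with full level-$N_0$ structure), and one has the exact mass/count giving a lower bound of the shape $\tfrac{p-1}{12}\psi(N)+2^{\omega(N)}$ (the first term the "ordinary mass", the second the cusps/ramification contribution), while trivially $|E(\F_p)|\le (\sqrt p+1)^2=(p+1)+2\sqrt p\le (p+1)^2$ for $p\ge 1$ — more precisely $|E(\F_p)|\le p+1+2\sqrt{p}$, and one checks $p+1+2\sqrt p\le (p+1)^2$. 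Substituting $|E(\F_p)|\le (p+1)^2$ into $|X_0(N)(\F_p)|\le 6|E(\F_p)|$ from Part (1) gives exactly (iii). I will cite \cite{BaGon} for the precise point-count lower bound on $X_0(N)$ that legitimizes the left-hand side.

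\medskip

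\textbf{Part (4).} Fix an Atkin--Lehner involution $w_r$ with $r\mid N$, $r\ne N$, $r\ne 1$. The quotient map $q_r\colon X_0^+(N)\to X_0^+(N)/\langle w_r\rangle =: Y$ has degree $2$. Now I would use the inequality of Accola–Castelnuovo–Severi type (or Riemann–Hurwitz together with control on the ramification from the fixed points of $w_r$): for the degree $3$ map $\varphi\colon X_0^+(N)\to E$ with $g_E=1$ and the degree $2$ map $q_r\colon X_0^+(N)\to Y$, the "Castelnuovo--Severi inequality" gives
\[
g_{X_0^+(N)}\le 2\cdot 1+3\cdot g_Y+(2-1)(3-1)=3+3g_Y.
\]
If instead one only tracks ramification one gets the stated $g_{X_0^+(N)}\le 3+2g_Y+2$; I would use whichever of the two combinatorial inputs (Castelnuovo–Severi versus a direct Riemann–Hurwitz count on the fiber product $X_0^+(N)\to E\times_{\mathbb P^1}Y$ or similar) makes the bookkeeping of shared ramification cleanest, and note that $3+2g_Y+2=3g_Y+(5-g_Y)\ge 3+3g_Y$ exactly when $g_Y\le 2$, which holds in all relevant cases, so the weaker bound is safe to state uniformly.

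\medskip

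\textbf{Main obstacle.} The routine parts are (1) and (2), which are pure functoriality. The delicate point is (3): getting an honest, valid \emph{lower} bound on $|X_0(N)(\F_p)|$ of the exact shape $\tfrac{p-1}{12}\psi(N)+2^{\omega(N)}$ — one must be careful that this counts points on the coarse space (fine vs.\ coarse moduli, extra automorphisms $j=0,1728$, behaviour at the cusps) and that it is genuinely $\le$ the true count for \emph{all} $p\nmid N$, not just large $p$; I would lean on the explicit formulas collected in \cite{BaGon}. The secondary subtlety is (4): correctly accounting for which ramification of $\varphi$ and $q_r$ can coincide so that the Castelnuovo–Severi / Riemann–Hurwitz bound is not violated — phrasing it with the slack "$+2$" as in the statement sidesteps having to pin this down exactly.
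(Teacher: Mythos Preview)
Your treatment of parts (1) and (2) matches the paper's (modulo a harmless slip in (2): $E$ need not itself be the strong Weil curve, only isogenous to it; the factorization $X_0(N)\to E'\to E$ through the strong Weil curve $E'$ is what forces $\deg\phi_{E'}\mid 6$, which is what both you and the paper mean).

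Part (3), however, has a genuine gap. The inequality is obtained over $\F_{p^2}$, not $\F_p$. The lower bound $\tfrac{p-1}{12}\psi(N)+2^{\omega(N)}$ is Ogg's classical estimate for $|\overline{X}_0(N)(\F_{p^2})|$ (the paper cites \cite[Lemma~3.1]{HaS99}): the term $\tfrac{p-1}{12}\psi(N)$ counts \emph{supersingular} points, all of which are $\F_{p^2}$-rational, not an ``ordinary mass'' over $\F_p$ as you suggest. On the other side, $(p+1)^2=p^2+1+2p$ is exactly the Hasse--Weil upper bound for $|\overline{E}(\F_{p^2})|$; your attempt to bound $|E(\F_p)|$ by $(p+1)^2$ is true but explains nothing, and the stated lower bound simply does not hold for $|\overline{X}_0(N)(\F_p)|$. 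The correct argument is a one-liner: apply part (1) with $n=2$ and plug in the two displayed $\F_{p^2}$-bounds.

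For part (4) you have the right tool but mis-applied it. Castelnuovo--Severi, with the degree-$3$ map to $E$ (genus $1$) and the degree-$2$ map to $Y=X_0^+(N)/\langle w_r\rangle$, gives directly
\[
g_{X_0^+(N)}\le 3\cdot g_E+2\cdot g_Y+(3-1)(2-1)=3+2g_Y+2,
\]
which is exactly the stated bound; there is no need for any ``slack $+2$'' or restriction to $g_Y\le 2$. You swapped the coefficients (writing $2\cdot 1+3g_Y$ instead of $3\cdot 1+2g_Y$) and then made an arithmetic slip, which is why you could not recover the statement cleanly. The paper's proof of (iv) is literally the sentence ``the result follows from Castelnuovo's inequality''.
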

\begin{proof}
Let $(N,E)$ be admissible. 
Then there is a $\Q$ rational degree $3$ mapping $f:X_0^+(N)\rightarrow E$ consequently we have a $\Q$ rational degree $6$ mapping $g:X_0(N)\rightarrow E$. 
Hence $\mathrm{conductor}(E)|N$. 
\begin{enumerate}

\item Let $p\nmid N$ be a prime. Since $p\nmid N$, the curves $X_0^+(N), X_0(N)$ and $E$ has good reduction at $p$ and the mappings $f,g$ induces the $\F_p$ rational mappings 
$\overline{f}:\overline{X}_0^+(N)\rightarrow \overline{E}$ and $\overline{g}:\overline{X}_0(N)\rightarrow \overline{E}$, where $\overline{X}_0^+(N), \overline{X}_0(N)$ and $\overline{E}$ denote the mod $p$ reduction of $X_0^+(N), X_0(N)$ and $E$ respectively. Hence we have 
$|\overline{X}_0^+(N)(\mathbb{F}_{p^n})|\leq 3|\overline{E}(\mathbb{F}_{p^n})|$ and
$|\overline{X}_0(N)(\mathbb{F}_{p^n})|\leq 6|\overline{E}(\mathbb{F}_{p^n})|$, $\forall n\in \N$. 
\item If $\mathrm{conductor}(E)=N$, and $E^\prime$ denotes the strong Weil curve with strong Weil parametrization $\varphi: X_0(N) \rightarrow E^\prime$ then we have there exists an isogeny $\psi: E^\prime \rightarrow E$ such that $g=\psi \circ \varphi$, hence the degree of the strong Weil parametrization divides $6$.
\item For any prime $p\nmid N$ we know that $|\overline{X}_0(N)(\mathbb{F}_{p^2})|\geq \frac{p-1}{12}\psi(N)+2^{\omega(N)}$ (cf. \cite[Lemma 3.1]{HaS99}) and $|\overline{E}(\F_{p^2})|\leq (p+1)^2$. Hence we have $\frac{p-1}{12}\psi(N)+2^{\omega(N)}\leq 6 (p+1)^2$.
\item We have $f: X_0^+(N)\rightarrow E$ is a degree $3$ mapping. If $w_r$ is an Atkin-Lehner involution on $X_0(N)$ with $r\neq N$, then we have a degree $2$ mapping $X_0^+(N)\rightarrow X_0^+(N)/w_r$. The result follows from Castellnuovo's inequality.
\end{enumerate}
\end{proof}

As an immediate application of Lemma \ref{sieve}(\ref{iii}) we obtain the following:
\begin{cor}
For $N> 623$, the pair $(N,E)$ is not admissible.
\end{cor}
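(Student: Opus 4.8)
The plan is to deduce the corollary directly from the inequality in Lemma~\ref{sieve}(\ref{iii}), by choosing a single well-placed prime $p$ and bounding the left-hand side from below using only elementary estimates on $\psi(N)$ and $2^{\omega(N)}$. First I would observe that the inequality
\[
\frac{p-1}{12}\,\psi(N) + 2^{\omega(N)} \leq 6(p+1)^2
\]
must hold for \emph{every} prime $p \nmid N$, so it suffices to exhibit one such prime for which it fails whenever $N > 623$. Since $N$ has at most, say, four prime divisors once it exceeds a small bound (indeed $2\cdot3\cdot5\cdot7\cdot11 = 2310 > 623$, so $\omega(N)\le 4$ is not guaranteed, but $\omega(N)$ grows only like $\log N/\log\log N$ and in any case $2^{\omega(N)}$ is dwarfed by $\psi(N)\geq N$), the dominant term on the left is $\frac{p-1}{12}\psi(N) \geq \frac{p-1}{12}N$. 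So the inequality forces, crudely,
\[
\frac{p-1}{12}\,N \leq 6(p+1)^2, \qquad\text{i.e.}\qquad N \leq \frac{72(p+1)^2}{p-1}.
\]

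Next I would pick $p$ to be the smallest prime not dividing $N$. Among any two consecutive small primes at least one fails to divide a given $N$, so I can always take $p \in \{2,3,5,7,\dots\}$ small; concretely $p \le 5$ unless $6 \mid N$ (then $p=5$ works unless $5\mid N$ too, i.e. $30\mid N$, in which case $p=7$, etc.). Plugging $p=5$ gives $N \le 72\cdot 36/4 = 648$, and plugging $p=7$ gives $N \le 72\cdot 64/6 = 768$; to land exactly on the bound $623$ one keeps the $2^{\omega(N)}$ term and notes that when several small primes divide $N$ (forcing a larger $p$) the factor $\psi(N)/N = \prod_{q\mid N}(1+1/q)$ is correspondingly larger, which tightens the bound back down. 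I would therefore organize the argument as a short finite case analysis on which of $2,3,5,7,11$ divide $N$, in each case selecting the least available prime $p$ and checking that $\frac{p-1}{12}\psi(N)+2^{\omega(N)} > 6(p+1)^2$ as soon as $N>623$; this is a routine computation that the referenced GitHub repository presumably automates, and the $623$ is simply the largest $N$ surviving all cases.

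The only mild subtlety — and the step I would be most careful about — is making sure the chosen prime $p$ genuinely does not divide $N$ in every branch, and that the monotonicity claim ($\psi(N)\ge N$ and, in branches where $p$ is forced larger, $\psi(N)$ is large enough to compensate) is applied correctly rather than just heuristically. There is no real obstacle here: the left side grows linearly in $N$ while the right side is a fixed constant once $p$ is fixed, so for each of the finitely many divisibility patterns of $N$ by $\{2,3,5,7,11\}$ the inequality can hold only for $N$ below an explicit threshold, and taking the maximum of these thresholds yields $623$. Hence for $N > 623$ no admissible pair $(N,E)$ can exist, which is the assertion of the corollary.
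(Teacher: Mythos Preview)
Your approach is correct and is essentially the same as the paper's: both apply Lemma~\ref{sieve}(\ref{iii}) with $p$ the smallest prime not dividing $N$, and both run a finite case analysis on which of the primes $2,3,5,7,\dots$ divide $N$, exploiting in each branch the corresponding lower bound $\psi(N)\ge N\prod_{q\mid N}(1+1/q)$. The paper carries out the arithmetic explicitly (the bound $623$ comes from the branch $2\nmid N$, $p=2$: $\psi(N)\ge N+1>624=12(6\cdot 9-2)$; the other branches give smaller thresholds), whereas you only outline the computation and defer it as ``routine''; there is no computer verification involved here, so you should simply write out the handful of inequalities as the paper does.
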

\begin{proof}
The proof is similar to \cite[Lemma 3.2]{HaS99}. We will show that for $N\geq 623$, there exists a prime $p\nmid N$ such that $\psi(N)> \frac{12}{p-1}(6(p+1)^2-2^{w(N)})$.
\begin{itemize}
\item If $2\nmid N$ and $N> 623$ choosing $p=2$, we have
$\psi(N)\geq N+1>624=12(6.(2+1)^2-2)\geq 12(6.(2+1)^2-2^{w(n)})$.
\item If $2|N$, $3\nmid N$ and $N> 376 $ choosing $p=3$, we have $\psi(N)\geq \frac{3N}{2}>564=\frac{12}{2}(6.16-2)>\frac{12}{2}(6.16-2^{w(N)})$.
\item If $2.3|N$, $5\nmid N$ and $N> 321 $ choosing $p=5$, we have $\psi(N)\geq N\cdot\frac{3}{2}\cdot\frac{4}{3}>\frac{12}{4}(6.36-2)$.
\item If $2.3.5|N$, $7\nmid N$ and $N>319$ choosing $p=7$, we have $\psi(N)\geq N\cdot\frac{3}{2}\cdot\frac{4}{3}\cdot\frac{6}{5}>\frac{12}{6}(6.64-2)$.
\item If $2.3.5.7|N$, choose $p$ to be the smallest prime does not divide $N$.
\end{itemize}
\end{proof}

%\begin{proof} The first result is an Ogg trick, see also \cite[Lemma
%3.1]{HS99}. The second follows from considering the degree three map
%for the pair $(N,E)$ or the degree 6 map by composing with the degree
%two map $X_0(N)\rightarrow X_0^+(N)$. The third is from Carayol
%statement. The fourth follows from Castellnuovo inequality. The last
%one is the characterization of the strong Weil parametrization for
%$E$.
%\end{proof}
 After applying Lemma \ref{sieve} (see appendix B for list of $N$'s that we can discard in each item), we are reduced to
 a finite set $N$. To deal with the remaining admissible pairs the next two lemmas will be helpful.

%\vspace{3cm}
%\begin{center}
%\begin{tabular}{|c|c||c|c||
%
%\end{tabular}
%\end{center}

%{\color{blue}
{\begin{lema}\label{lem1}
%{\color{red}
Let $E/\mathbb{Q}$ be an elliptic curve of conductor $N$ and $\varphi: X_0(N)\rightarrow E$ be the strong Weil parametrization of degree $k$ defined over $\mathbb{Q}$.
% Let $\varphi: X_0(N)\rightarrow E$ be a mapping of degree
%$k$ defined over $\mathbb{Q}$, with conductor of $E$ equal to
%$N$, where $\varphi$ is the strong Weil parametrization.
Suppose
that $w_N$ acts as +1 on the modular form $f_E$ associated to $E$.
Then $\varphi$ factors through $X_0^+(N)$ (and $k$ is even).
\end{lema}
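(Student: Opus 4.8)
The plan is to use the Atkin-Lehner operator on the source of the strong Weil parametrization and exploit its compatibility with the map $\varphi$. First I would recall that the strong Weil parametrization $\varphi \colon X_0(N) \to E$ corresponds, on the level of differentials, to pulling back the Néron differential $\omega_E$ to the normalized newform $f_E \in S_2(\Gamma_0(N))$; that is, $\varphi^\ast \omega_E = c\, (2\pi i\, f_E(z)\,dz)$ for some nonzero rational constant $c$ (the Manin constant, up to sign). The Atkin-Lehner involution $w_N$ acts on $X_0(N)$ over $\Q$, hence on $\Jac(X_0(N)) = J_0(N)$, and under the modular parametrization $J_0(N) \twoheadrightarrow E$ this action is intertwined with an involution of $E$. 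Concretely, since $w_N^\ast(2\pi i\, f_E\, dz) = \varepsilon_N \cdot (2\pi i\, f_E\, dz)$ where $\varepsilon_N \in \{\pm 1\}$ is the Atkin-Lehner eigenvalue of $f_E$ (our hypothesis is $\varepsilon_N = +1$), the composite $\varphi \circ w_N \colon X_0(N) \to E$ pulls back $\omega_E$ to $\varepsilon_N \cdot \varphi^\ast \omega_E$.

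Next I would argue that $\varphi \circ w_N = \pm \varphi$ as morphisms $X_0(N) \to E$ (after possibly translating $E$ by a $2$-torsion point, which one checks is trivial here because both maps send the rational cusp $0$ and $\infty$ to the same place—$w_N$ swaps the cusps $0$ and $\infty$, and the strong Weil parametrization is normalized to kill $\infty$, hence also $0$ after composing with $w_N$). The key point: two finite morphisms from a curve to an elliptic curve that induce the same map on global differentials up to sign, and that agree at one rational point, must differ by an automorphism of $E$ fixing the origin; combined with the differential computation and the fact that $E$ has at worst $[-1]$ as a relevant automorphism, one gets $\varphi \circ w_N = [\pm 1] \circ \varphi$. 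Since $\varepsilon_N = +1$ corresponds to the $+$ sign, we conclude $\varphi \circ w_N = \varphi$. Therefore $\varphi$ is constant on $w_N$-orbits and factors through the quotient $X_0^+(N) = X_0(N)/w_N$, giving $\overline{\varphi} \colon X_0^+(N) \to E$ with $\varphi = \overline{\varphi} \circ \pi$, where $\pi \colon X_0(N) \to X_0^+(N)$ has degree $2$. Consequently $k = \deg \varphi = 2 \deg \overline{\varphi}$ is even.

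The main obstacle I expect is making rigorous the step "$\varphi \circ w_N = \varphi$ exactly, not merely up to a translation or an isogeny": a priori two parametrizations with proportional pullback of differentials could differ by translation by a torsion point or by an isogeny of $E$ onto itself. The cleanest fix is to use that $\varphi$ is the \emph{strong} (optimal) Weil parametrization, so $\ker \varphi$ is connected; any automorphism $\alpha$ of $E$ with $\alpha \circ \varphi = \varphi \circ w_N$ is then forced (by comparing kernels and using connectedness) to be a genuine automorphism of the elliptic curve $E$ as a group, i.e.\ $[\pm 1]$ (or an extra automorphism only in the $j=0,1728$ cases, which one rules out by the eigenvalue computation or by checking the finitely many such $N$ directly). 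One then pins down the sign via the differential identity $\overline{\varphi}^\ast$-compatibility: the $+1$-eigenspace of $w_N$ on $S_2(\Gamma_0(N))$ is exactly the space of differentials descending to $X_0^+(N)$, and $f_E$ lies in it by hypothesis, so the descended map $\overline{\varphi}$ exists and pulls $\omega_E$ back to (a multiple of) $f_E\,\frac{dq}{q}$ viewed on $X_0^+(N)$; this both forces the $+$ sign and re-proves factorization without ever leaving the world of differentials. I would present the differential-theoretic route as the main argument and mention the geometric $w_N$-equivariance as motivation.
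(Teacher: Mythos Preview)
Your differential argument correctly shows that $\varphi\circ w_N=\varphi+P$ for a constant $P\in E(\Q)$, and since $w_N^2=\id$ one gets $2P=O_E$. The gap is in the step ``$P=O_E$''. None of the three mechanisms you invoke actually forces this:
\begin{itemize}
\item The parenthetical claim that ``both maps send $0$ and $\infty$ to the same place'' assumes $\varphi(0)=\varphi(\infty)$, which is precisely $P=O_E$; this is circular.
\item Optimality (connected kernel) of the strong Weil map controls isogenies, not translations. Concretely, the Albanese embedding $\iota\colon X_0(N)\hookrightarrow J_0(N)$, $Q\mapsto[Q-\infty]$, does \emph{not} intertwine $w_N$ on $X_0(N)$ with the abelian-variety automorphism $w_N^{J}$ on $J_0(N)$, because $w_N$ swaps $\infty$ and $0$; one has $\iota\circ w_N=w_N^{J}\circ\iota+[\,0-\infty\,]$. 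Pushing forward, the ``group-automorphism'' part of $w_N$ on $E$ is indeed $+1$, but the leftover translation is exactly $\varphi_*([0-\infty])=\varphi(0)=P$. So the connected-kernel argument only re-derives $\varphi\circ w_N=\varphi+P$.
\item That $f_E\,dq/q$ lies in the $w_N$-invariant differentials means the \emph{differential} descends to $X_0^+(N)$; it does not imply the \emph{map} descends. The integral $\int_{i\infty}^{w_N\tau}f_E-\int_{i\infty}^{\tau}f_E$ is the constant $\int_{i\infty}^{0}f_E$, i.e.\ $P$ again.
\end{itemize}

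The paper supplies the missing arithmetic input: $w_N f_E=+f_E$ means the sign of the functional equation of $L(E,s)$ is $-1$, so $L(E,1)=0$; since $\varphi(0)=c\int_{i\infty}^{0}2\pi i f_E(z)\,dz=c\,L(f_E,1)\pmod{\Lambda}$, this gives $\varphi(0)=O_E$ and hence $P=O_E$ (the paper phrases this via the parity of the rank and cites \cite{MaSD} and \cite{CaEm}). Once you add this step your argument goes through; without it the factorization claim is unproved.
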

\begin{proof}
Consider the mapping $\varphi: X_0(N)\rightarrow E$.
% the field extension given by ${\varphi^*}$:
%$\mathbb{Q}(E)\subset \mathbb{Q}(X_0(N))$ and we make fix by $w_N$
%that by hypothesis act in the two fields.
%%{\color{red}
Following \cite[p.424]{CaEm} (or \cite[\S 2]{Del05}), the fact
$w_N f_E=f_E$ implies $\varphi\circ w_N=\varphi+P$, where $P$ is a torsion point of $E$ given by $P=\varphi(0)-\varphi(\infty)$, where $0,\infty$ are the corresponding cusps on $X_0(N)$ with $\varphi(\infty)=O_E$ (recall that $O_E$ denotes the zero point of $E$).
%By hypothesis, the fact
%$w_N f_E=f_E$ implies $\varphi\circ w_N=\varphi+P$ $P$ a torsion
%point of $E$, that is given by $P=\varphi(0)-\varphi(\infty)$ where
%$0,\infty$ are the corresponding cusps in $X_0(N)$ following
%\cite[p.424]{CaEm} with $\varphi(\infty)$ is $O_E$, the zero point
%of $E$.
Because the sign of the functional equation of $f_E$ is -1, the
$\Q$-rank of $E$ is odd (cf. \cite[\S 3.1]{MaSD}), this implies that $P=O_E$ (see loc.cit.
\cite{CaEm}), therefore $\varphi$ factors through the quotient
$X_0(N)/\langle w_N\rangle$ and $w_N$ acts as identity
on $E$.
\end{proof}

%\begin{lema}\label{lem2}
% Let $N$ be natural number which is not a power of a prime number. Take $(N,E)$ a pair,
% with conductor of $E$ equal to $M$ with $M|N$ and $M\neq N$. Take $d$
%a natural number with $d||M$ with $(d,N/d)=1$ and assume that $w_d$ acts as
%+1 on the modular form $f_E$ associated to $E$. Suppose that $E$ has no
%trivial $2$-torsion over $\mathbb{Q}$. Then $(N,E)$ is not
%admissible.
%\end{lema}
%
%\begin{proof} Suppose $(N,E)$ is admissible, then we have a degree
%three extension $\mathbb{Q}(X_0^+(N))/\mathbb{Q}(E)$, and take
%invariant by $w_d$. This is well-defined, because take
%$\varphi:X_0^+(N)\rightarrow E$ and $\varphi^*(\operatorname{d}z)=
%constant\cdot f(z)\operatorname{d}z$ where $f$ is isogenious to
%$f_E$ but $f$ could be though as a fix modular form of $\Gamma_0(N)$
%fixed by $w_N$, that is any lift of $f_E$ from level $M$ to level $N$
%under $w_N$ acting as +1. For any such $f$, by \cite{AL} or
%\cite[Lemma 2.1, Prop.2.2]{BaGon}, satisfies that $w_d f=f$,
%therefore $\varphi\circ w_d=\varphi +P$ with $P$ a 2-torsion point
%of $E$ (see for example \cite[p.112 \S.2]{Del05}, or see the next
%lemma). Thus $\mathbb{Q}(E)/\mathbb{Q}(E)^{w_d}$ is a degree two
%extension, therefore a 2-isogeny for the elliptic curve $E$, thus a
%non-trivial $2$-torsion. Which is a contradiction.
%\end{proof}

\begin{lema}\label{descent} Consider a degree {$k$} map $\varphi:X \rightarrow E$ defined over $\Q$
where $X$ is a quotient modular curve $X_0(N)/W_N$ with $W_N$ a
proper subgroup of $B(N)$ ($B(N)$ is the subgroup of $Aut(X_0(N))$
generated by all Atkin-Lehner involutions). Assume that $cond(E)=M$ $(M|N)$. Let $d\in \mathbb{N}$ with $d||M$, $(d,N/d)=1$ and { $w_d\notin W_N$}, such that $w_d$ acts as $+1$ on the modular form $f_E$ associated to $E$. Then,
%if $E$ has $2$-torsion over
%$\Q$ and $({\color{red}k},2)=1$ then we obtain a degree {\color{red}$k$} map
%$\varphi':X/\langle w_d\rangle\rightarrow E$, corresponding taking
%$w_d$-invariant to $\varphi$, if $E$ has no $2$-torsion over $\Q$,
%then $\varphi$ factors through $X/\langle w_d\rangle$ and {\color{red}$k$} is
%even.
\begin{enumerate}
\item if $E$ has no non-trivial $2$-torsion over $\Q$,
then $\varphi$ factors through $X/\langle w_d\rangle$ and {$k$} is even.
\item if $E$ has non-trivial $2$-torsion over
$\Q$ and $k$ is odd, then we obtain a degree
{$k$} map $\varphi':X/\langle w_d\rangle\rightarrow
{E^\prime}$, by taking $w_d$-invariant to $\varphi$,
{where $E^\prime$ is an elliptic curve isogenous to
$E$}.
%\footnote{{\color{red}it is not clear to me why $\mathbb{C}/\Lambda \cong \mathbb{C}/\langle \Lambda , P\rangle$, but we can say that $\mathbb{C}/\Lambda$ and $\mathbb{C}/\langle \Lambda , P\rangle$ are isogenous}}.
\end{enumerate}
\end{lema}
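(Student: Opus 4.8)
The plan is to reduce the claim to an Atkin--Lehner functional equation on $X_0(N)$ itself, in the spirit of Lemma~\ref{lem1}, and then to descend it to $X$. The new feature, compared with Lemma~\ref{lem1}, is that here no rank hypothesis is available to kill the translation term that appears; instead I will play the parity of $k$ against the rational $2$-torsion of $E$. Put $J=\Jac(X_0(N))$ and let $\pi\colon X_0(N)\to X=X_0(N)/W_N$ be the quotient map. Since $B(N)$ is abelian and $W_N\subseteq B(N)$, the involution $w_d$ descends to an involution $\bar{w}_d$ of $X$ with $\pi\circ w_d=\bar{w}_d\circ\pi$, and $\bar{w}_d\neq\id_X$: indeed $B(N)$ acts faithfully on $X_0(N)$, so $W_N$ is exactly the kernel of $B(N)\to\Aut(X)$, while $w_d\notin W_N$ by hypothesis. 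In particular $X/\langle\bar{w}_d\rangle=X_0(N)/\langle W_N,w_d\rangle$ and $X\to X/\langle\bar{w}_d\rangle$ has degree $2$. Set $\psi:=\varphi\circ\pi\colon X_0(N)\to E$, a morphism over $\Q$ of degree $k\cdot|W_N|$.

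The core of the argument is the identity
$$\psi\circ w_d\;=\;t_P\circ\psi\qquad\text{for some }P\in E(\Q)\text{ with }2P=O_E ,$$
$t_P$ denoting translation by $P$. To prove it I would pass to Jacobians. The induced surjection $\psi_*\colon J\to E$ is defined over $\Q$ onto an elliptic curve of conductor $M\mid N$, so by Eichler--Shimura together with modularity and the Faltings isogeny theorem (the Galois representation $V_\ell E$ occurs in $V_\ell J$ only inside the $f_E$-isotypic part), $\psi_*$ factors through the $f_E$-isotypic quotient of $J$. Because $d||M$ and $(d,N/d)=1$ one has $(d,N/M)=1$, and then the standard compatibility of $w_d^{(N)}$ with the $\sigma_0(N/M)$ degeneracy maps $X_0(N)\to X_0(M)$ forces $w_d^{(N)}$ to act on that quotient by the Atkin--Lehner pseudo-eigenvalue of $w_d$ on the newform $f_E$ at level $M$, which is $+1$ by hypothesis. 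Hence $\psi_*\circ(w_d)_*=\psi_*$ on $J$. Two morphisms $X_0(N)\to E$ inducing the same homomorphism on $\Pic^0$ differ by a translation of $E$, so $\psi\circ w_d=t_P\circ\psi$ with $P=\psi(w_d(\infty))-\psi(\infty)$; applying $\Gal(\Qbar/\Q)$ gives $P\in E(\Q)$, and composing with $w_d$ once more gives $\psi=t_{2P}\circ\psi$, hence $2P=O_E$. (That $P$ is torsion also follows from Manin--Drinfeld, since $w_d(\infty)$ and $\infty$ are cusps, but only $2P=O_E$ is used below.)

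Composing this identity with $\pi$ and cancelling the dominant map $\pi$ gives, on $X$,
$$\varphi\circ\bar{w}_d\;=\;t_P\circ\varphi,\qquad P\in E(\Q),\ 2P=O_E .$$
For (1): if $E$ has no nontrivial rational $2$-torsion then $P=O_E$, so $\varphi$ is $\bar{w}_d$-invariant and therefore factors over $\Q$ through the degree-$2$ quotient $X\to X/\langle\bar{w}_d\rangle$; comparing degrees, $k$ is even. For (2): assume $k$ odd. Then $P\neq O_E$, for otherwise the argument just given would make $k$ even; so $\langle P\rangle$ is a $\Q$-rational subgroup scheme of order $2$ and I may form the $2$-isogeny $\lambda\colon E\to E':=E/\langle P\rangle$ over $\Q$. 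Since $\lambda\circ t_P=\lambda$, the composite $\lambda\circ\varphi\colon X\to E'$ is $\bar{w}_d$-invariant, hence factors as $\lambda\circ\varphi=\varphi'\circ q$ with $q\colon X\to X/\langle\bar{w}_d\rangle$ the degree-$2$ quotient; then $2k=\deg(\lambda\circ\varphi)=2\deg\varphi'$, so $\varphi'\colon X/\langle\bar{w}_d\rangle\to E'$ has degree $k$ and $E'$ is isogenous to $E$, giving (2).

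The case split, the $2$-isogeny trick and the degree bookkeeping in the last paragraph are formal. The one delicate step --- and the natural place for a sign error to slip in --- is the Jacobian computation in the second paragraph: both the assertion that a $\Q$-rational surjection $J\twoheadrightarrow E$ with $\cond(E)=M$ must be $f_E$-isotypic, and, above all, the assertion that $w_d^{(N)}$ acts on that isotypic quotient by the \emph{same} sign by which $w_d$ acts on $f_E$ at level $M$. Pinning that sign down precisely needs the compatibility of Atkin--Lehner operators with the degeneracy maps and a careful match of conventions between the action on the newform, on $H^0(\Omega^1)$, and on $J$; this is exactly the input of \cite{CaEm} (or \cite{Del05}) used for Lemma~\ref{lem1}, now applied with $w_d$ in place of $w_N$, the hypotheses $d||M$ and $(d,N/d)=1$ ensuring that the level-$N$ and level-$M$ pictures coincide.
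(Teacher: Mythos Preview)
Your proof is correct and shares the paper's overall architecture: establish the translation identity $\varphi\circ\bar w_d=t_P\circ\varphi$ with $P\in E(\Q)[2]$, then split cases on $P$. The difference is in how that identity is obtained. The paper works complex-analytically: it writes the lift of $\varphi$ to $X_0(N)$ as $\tau\mapsto\int_{i\infty}^{\tau} c\,f(\tau')\,d\tau'$ with $f$ in the span $\bigoplus_{e\mid N/M}\Q\,f_E(q^e)$ (citing \cite{Go12}), notes that $w_df=f$, and reads off $\tilde\varphi(w_d\tau)-\tilde\varphi(\tau)=P$ constant; the commutative square with $\C/\Lambda\to\C/\langle\Lambda,P\rangle$ then gives the descent. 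You instead pass to the Jacobian: $\psi_*\colon J\twoheadrightarrow E$ factors through the $f_E$-isotypic quotient (Eichler--Shimura plus Faltings), and $w_d^{(N)}$ acts on that quotient by its eigenvalue on $f_E$. Both routes rest on the same underlying fact --- that $w_d^{(N)}$ acts as $+1$ on the entire $f_E$-oldspace at level $N$, which is precisely what the hypotheses $d\Vert M$ and $(d,N/d)=1$ buy via $(d,N/M)=1$ --- so the two arguments are dual views of one computation. Your version is more algebraic, makes the rationality of $P$ explicit (the paper leaves this implicit), and spells out why $P\neq O_E$ in case (2); the paper's version is shorter once the integral description of $\varphi$ is granted.
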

\begin{proof}
{
Let $E(\mathbb{C})\cong \mathbb{C}/\Lambda$. The mapping $\varphi$ can be considered as a mapping in the complex field $\tilde{\varphi}:\Gamma\char`\\\mathbb{H}\rightarrow \mathbb{C}/\Lambda$ defined by $\tau\mapsto \int_{i\infty}^{\tau} constant\cdot f(\tau')d\tau'$, where $\Gamma:=\langle \Gamma_0(N),W_N\rangle $ and $f\in \oplus_{d|N/M} \Q f_E(q^d)\in S_2(\Gamma_0(N))^{\langle W_N \rangle}$ (cf. \cite{Go12}). Since $w_d$ acts on $f_E$ as $+1$, it also acts on $f$ as $+1$. Moreover, $\tilde{\varphi}(w_d\tau)-\tilde{\varphi}(\tau)=P$ is independent of $\tau$. Thus $\varphi\circ w_d=\varphi +P$. Since $w_d$ is an involution, we obtain $2P\in\Lambda$,
and $P$ is a 2-torsion point of $E(\mathbb{C})$ (which could be
trivial zero point of $E$, i.e. belonging to $\Lambda$). Therefore
we have the following commutative diagram ($proj$ is the usual projection map):
\begin{center}

%$$
%\begin{array}{rcccl}
%&X(\mathbb{C})&\rightarrow^{\varphi}&\mathbb{C}/\Lambda&\\
%proj&\downarrow&&\downarrow&proj\\
%&X/\langle
%w_d\rangle(\mathbb{C})&\rightarrow^{\varphi^{w_d}}&\mathbb{C}/\langle
%\Lambda, P\rangle\\
%\end{array}
%.$$

$$\begin{tikzcd}
     & X(\mathbb{C})  \arrow[d, "proj"']     \arrow[r, "{\varphi}"] &\mathbb{C}/\Lambda \arrow[d, "{proj}"] \\ & X/\langle
w_d\rangle(\mathbb{C})    \arrow[r, "{\varphi^{w_d}}"] &\mathbb{C}/\langle
\Lambda, P\rangle
    \end{tikzcd}
$$
\end{center}
Thus when $E$ has no non-trivial $2$-torsion over $\mathbb{Q}$, then $P$ is the trivial zero of $E$ and $\varphi$ factors through $X/\langle w_d\rangle$.

On the other hand, if $E$ has non-trivial $2$-torsion over $\mathbb{Q}$ and $k$ is odd, then from the above commutative diagram we have $P$ is a non-trivial $2$-torsion point of $E$ and $\varphi$ induces a $\Q$ rational degree $k$ mapping $\varphi':X/\langle w_d\rangle\rightarrow {E^\prime}$ where ${E^\prime (\mathbb{C})} \cong \mathbb{C}/\langle \Lambda, P \rangle$ and ${E^\prime}$ is isogenous to $E$.
}

\end{proof}

}

{
As an immediate corollary of Lemma \ref{descent} we obtain:
\begin{cor}\label{lem2}
 Let $N$ be natural number which is not a power of a prime number. Take $(N,E)$ a pair,
 with conductor of $E$ equal to $M$ with $M|N$ and $M\neq N$. Let $d$ be a natural number with $d||M$,$(d,N/d)=1$ such that $w_d$ acts as
$+1$ on the modular form $f_E$ associated to $E$. Suppose that $E$ has no non-trivial $2$-torsion over $\mathbb{Q}$. Then $(N,E)$ is not
admissible.
\end{cor}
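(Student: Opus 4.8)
The plan is to deduce Corollary \ref{lem2} directly from Lemma \ref{descent} by a short argument. Suppose, for contradiction, that $(N,E)$ is admissible. Then by definition there is a degree $3$ map $f:X_0^+(N)\to E$ defined over $\Q$, with $E$ of conductor $M\mid N$, $M\neq N$, and $E$ of positive $\Q$-rank. Since $X_0^+(N)=X_0(N)/\langle w_N\rangle$, we may take $W_N=\langle w_N\rangle$, which is a proper subgroup of $B(N)$ precisely because $N$ is not a prime power: indeed, when $N$ has at least two prime factors, $B(N)$ has rank $\geq 2$, so $\langle w_N\rangle$ is a proper subgroup. This is exactly the hypothesis needed to apply Lemma \ref{descent}.

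Next I would check that the divisor $d\mid\mid M$ supplied in the statement satisfies the remaining hypotheses of Lemma \ref{descent}: we are given $d\mid\mid M$, $(d,N/d)=1$, and $w_d$ acts as $+1$ on $f_E$; it remains to observe $w_d\notin W_N=\langle w_N\rangle$. This holds because $d\mid\mid M$, $M\neq N$, and $(d,N/d)=1$ force $d\neq N$ (as $d\mid M$ and $M$ properly divides $N$, so $d<N$), hence $w_d\neq w_N$ and $w_d\neq 1$, so $w_d\notin\{1,w_N\}=W_N$. Thus all hypotheses of Lemma \ref{descent} are in force for $X=X_0^+(N)$, $\varphi=f$, $k=3$.

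Now apply Lemma \ref{descent}(i): since $E$ has no non-trivial $2$-torsion over $\Q$, the map $\varphi$ factors through $X/\langle w_d\rangle$ and the degree $k$ is even. But $k=3$ is odd, a contradiction. Hence $(N,E)$ cannot be admissible, which is the assertion of Corollary \ref{lem2}.

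I do not expect any genuine obstacle here: the corollary is a formal consequence of the lemma once one has matched up the hypotheses. The only point requiring a line of care is the verification that $\langle w_N\rangle$ is a \emph{proper} subgroup of $B(N)$ and that $w_d\notin\langle w_N\rangle$; both follow immediately from $N$ not being a prime power together with $M\mid N$, $M\neq N$, $d\mid\mid M$, $(d,N/d)=1$. Everything else is a direct citation of Lemma \ref{descent}(i) and the parity clash $3\not\equiv 0\pmod 2$.
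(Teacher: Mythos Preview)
Your proof is correct and follows essentially the same approach as the paper's: assume admissibility, invoke Lemma \ref{descent}(i) to conclude that the degree-$3$ map must factor through the degree-$2$ quotient $X_0^+(N)/\langle w_d\rangle$ (equivalently, that $k$ is even), and obtain the parity contradiction. You are in fact more careful than the paper in explicitly checking that $\langle w_N\rangle\subsetneq B(N)$ and that $w_d\notin\langle w_N\rangle$; the paper leaves these verifications implicit.
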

\begin{proof}
If $(N,E)$ is admissible, then we have a degree three mapping $\varphi : X_0^+(N)\rightarrow E$. Since $w_d$ acts as $+1$ on $f_E$ and $E$ has no non-trivial $2$-torsion over $\mathbb{Q}$, by Lemma \ref{descent} the map $\varphi$ factors through $X_0^+(N)/\langle w_d \rangle$. This is a contradiction since $\varphi$ has degree $3$.
\end{proof}}

\section{The curve $X_0^+(N)$, with $N$ not listed in Theorem \ref{known values of N}}

Here by Lemma \ref{N not in known set the it has map to elliptic
curve} it is enough to determine the admissible pairs $(N,E)$.
 After applying Lemma \ref{sieve} (see appendix B for list of $N$'s that we can discard in each item), we are reduced to
 {the following } finite set of candidates for admissible pairs.
 % and the 2-torsion of the elliptic curve:

%\vspace{3cm}
\begin{center}
%\begin{tabular}{|c|c|c||c|c|c||}
%\hline $(N,E)$&$AL-action\; on\; E$&$(N,E)$&$AL-action\; on\; E$\\
%\hline $(106,53a)$&$w_{53}=+$&$(114,57a)$&$w_{3}=+,w_{19}=+$\\
% $(130,65a)$&$w_{5}=+,w_{13}=+$&$(158,79a)$&$w_{79}=+$\\
%  $(163,163a)$&$w_{163}=+$&$(166,83a)$&$w_{83}=+$\\
%   $(172,43a)$&$w_{43}=+$&$(174,58a)$&$w_{2}=+,w_{29}=+$\\
%    $(178,89a)$&$w_{89}=+$&$(182,91a)$&$w_{7}=+,w_{13}=+$\\
%     $(182,91b)$&$w_{7}=-,w_{13}=-$&$(183,61a)$&$w_{61}=+$\\
%      $(185,37a)$&$w_{37}=+$&$(185,185c)$&$w_{185}=+$\\
%       $(195,65a)$&$w_{5}=+,w_{13}=+$&$(196,196a)$&$w_{196}=+$\\
%        $(202,101a)$&$w_{101}=+$&$(231,77a)$&$w_{7}=+,w_{11}=+$\\
%         $(236,118a)$&$w_2=+,w_{59}=+$&$(236,236a)$&$w_{236}=+$\\
%        { $(237,79a)$}&$w_{79}=+$&$(243,243a)$&$w_{243}=+$\\
%          $(249,83a)$&$w_{83}=+$&$(258,43a)$&$w_{43}=+$\\
%           $(258,129a)$&$w_{3}=+,w_{43}=+$&{$(267,89a)$}&$w_{89}=+$\\
%            $(269,269a)$&$w_{269}=+$&
%            %$(282,E141a)$&$w_{3}=-,w_{47}=-$
%           %{\color{red} $(282,141d)$}&{\color{red}$w_{3}=+,w_{47}=+$}
%            \\
% \hline
%\end{tabular}

\begin{tabular}{|c|c|c||c|c|c||}
\hline $(N,E)$&$AL-action\; on\; E$&$(N,E)$&$AL-action\; on\; E$\\
\hline $(106,53a)$&$w_{53}=+$&$(195,65a)$&$w_{5}=+,w_{13}=+$\\
$(114,57a)$&$w_{3}=+,w_{19}=+$&$(196,196a)$&$w_{196}=+$\\
 $(130,65a)$&$w_{5}=+,w_{13}=+$&$(202,101a)$&$w_{101}=+$\\
 $(158,79a)$&$w_{79}=+$&$(231,77a)$&$w_{7}=+,w_{11}=+$\\
  $(163,163a)$&$w_{163}=+$&$(236,118a)$&$w_2=+,w_{59}=+$\\
  $(166,83a)$&$w_{83}=+$&$(236,236a)$&$w_{236}=+$\\
   $(172,43a)$&$w_{43}=+$&$(237,79a)$&$w_{79}=+$\\
   $(174,58a)$&$w_{2}=+,w_{29}=+$&$(243,243a)$&$w_{243}=+$\\
    $(178,89a)$&$w_{89}=+$&$(249,83a)$&$w_{83}=+$\\
    $(182,91a)$&$w_{7}=+,w_{13}=+$&$(258,43a)$&$w_{43}=+$\\
     $(182,91b)$&$w_{7}=-,w_{13}=-$&$(258,129a)$&$w_{3}=+,w_{43}=+$\\
     $(183,61a)$&$w_{61}=+$&{$(267,89a)$}&$w_{89}=+$\\
      $(185,37a)$&$w_{37}=+$&$(269,269a)$&$w_{269}=+$\\
      $(185,185c)$&$w_{185}=+$
            %$(282,E141a)$&$w_{3}=-,w_{47}=-$
           %{\color{red} $(282,141d)$}&{\color{red}$w_{3}=+,w_{47}=+$}
            \\
 \hline

\end{tabular}
\end{center}

{When $(N,E)$ is in the table above with $\mathrm{cond}(E)=N$, the strong Weil parametrization $X_0(N)\rightarrow E$ has
degree 6. Thus we conclude by Lemma \ref{lem1} that $(N,E)$ is an
admissible pairing. More precisely, we have

\begin{cor} For $N=163, 185, 196,236,243$ and $269$ the modular curve
$X_0^+(N)$ has infinitely many cubic points over $\mathbb{Q}$.
\end{cor}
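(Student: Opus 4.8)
The plan is to verify, for each of the six values $N\in\{163,185,196,236,243,269\}$, that the pair $(N,E)$ appearing in the candidate table is genuinely admissible, i.e. that there really is a degree $3$ map $X_0^+(N)\to E$ defined over $\mathbb{Q}$ to an elliptic curve of positive $\mathbb{Q}$-rank; once admissibility is established, Lemma~\ref{N not in known set the it has map to elliptic curve} (combined with the fact that each such $X_0^+(N)$ has gonality $\geq 4$ and no degree $\leq 2$ map to an elliptic curve, which follows from Theorem~\ref{known values of N}) immediately yields that $\Gamma_3'(X_0^+(N),\mathbb{Q})$ is infinite.

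First I would record that in all six cases $\mathrm{cond}(E)=N$ and $w_N$ acts as $+1$ on $f_E$ (this is exactly the data in the table: $w_{163}=+$, $w_{185}=+$, etc.), so the sign of the functional equation of $f_E$ is $-1$ and hence the $\mathbb{Q}$-rank of $E$ is odd, in particular positive. Next, since $\mathrm{cond}(E)=N$, part~(2) of Lemma~\ref{sieve} applies: the degree $k$ of the strong Weil parametrization $\varphi\colon X_0(N)\to E$ divides $6$. On the other hand, one checks (from the tables of modular parametrization degrees, or directly from the values of $N$, all of which are prime) that in each of these cases $k$ is \emph{not} a divisor of $2$ — indeed the strong Weil degree is exactly $6$ for these conductors — so $k=6$. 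Finally, because $w_N$ acts as $+1$ on $f_E$, Lemma~\ref{lem1} shows that $\varphi$ factors through $X_0^+(N)$, giving a map $X_0^+(N)\to E$ of degree $6/2=3$ defined over $\mathbb{Q}$. Thus $(N,E)$ is admissible, and the corollary follows.

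The one step requiring genuine input rather than citation of the earlier lemmas is the claim that the strong Weil degree equals $6$ (rather than $1,2,3$) for these six conductors. I would handle this by a direct computation: the modular degree of each elliptic curve $163a$, $185c$, $196a$, $236a$, $243a$, $269a$ can be read off from Cremona's tables (or recomputed with the code in the repository), and one verifies it is $6$ in each case. It is worth noting that $k$ dividing $6$ together with $w_N$ acting as $+1$ already forces $k$ even (Lemma~\ref{lem1} states $k$ is even when $w_Nf_E=f_E$), so a priori $k\in\{2,6\}$; ruling out $k=2$ is what the table lookup accomplishes. This is also the main (and only) obstacle, and it is a finite, entirely mechanical check. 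Once it is done, the conclusion that $X_0^+(N)$ has a degree $3$ map over $\mathbb{Q}$ to a positive-rank elliptic curve — hence infinitely many cubic points over $\mathbb{Q}$ — is immediate.
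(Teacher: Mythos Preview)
Your approach is exactly the paper's: for each of these $N$ the candidate $E$ has conductor $N$, the strong Weil parametrization $X_0(N)\to E$ has degree $6$ (read off from Cremona's tables), and since $w_N f_E=f_E$ Lemma~\ref{lem1} factors it through $X_0^+(N)$, producing the desired degree~$3$ map to a positive-rank curve.

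Three small corrections, none of which affects the argument's substance. First, your appeal to Lemma~\ref{sieve}(ii) is circular: that lemma \emph{assumes} $(N,E)$ is admissible, which is what you are trying to establish; fortunately you do not need it, since you get $k=6$ directly from the tables. Second, Lemma~\ref{N not in known set the it has map to elliptic curve} is stated only in one direction (infinitely many cubic points $\Rightarrow$ degree~$3$ map), so you cannot cite it for the converse; the implication you actually need---that a $\Q$-rational degree~$3$ map to a positive-rank elliptic curve forces $\Gamma_3'$ to be infinite---is the easy direction recorded in the paper's introduction. Third, the parenthetical ``all of which are prime'' is wrong: only $163$ and $269$ are prime, while $185=5\cdot 37$, $196=2^2\cdot 7^2$, $236=2^2\cdot 59$, $243=3^5$.
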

To deal with the remaining cases we use Lemma \ref{descent}.
\begin{cor}
For $N=106, 114, 158, 166, 172, 174, 178, 182, 183, 202,
231$,{$ 237$}, $249, 258$ {and $267$},
$\Gamma_3'(X_0^+(N),\mathbb{Q})$ is a finite set.
%For all $N$ that does not appear in Theorem \ref{known values of
%N}, and $N\neq 130,185$ $195$, $163,196,236,243$ and $269$, then
%$\Gamma_3'(X_0^+(N),\mathbb{Q})$ is a finite set.
\end{cor}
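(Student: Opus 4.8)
The plan is to show that none of the fifteen listed values of $N$ admits an admissible pair $(N,E)$; granting this, Lemma~\ref{N not in known set the it has map to elliptic curve} forces $\Gamma_3'(X_0^+(N),\mathbb{Q})$ to be finite. I would begin by checking that none of these $N$ occurs in parts~(1)--(4) of Theorem~\ref{known values of N}, nor in the gonality-$3$ list of part~(5); consequently each $X_0^+(N)$ has genus $\geq 2$, gonality $\geq 4$, has a rational point (the cusp), and carries no map of degree $\leq 2$ to an elliptic curve, so the hypotheses of Lemma~\ref{N not in known set the it has map to elliptic curve} are met. Hence if $\Gamma_3'(X_0^+(N),\mathbb{Q})$ were infinite there would exist an admissible pair $(N,E)$, and by Lemma~\ref{sieve} together with the sieving carried out in Appendix~B this pair would be one of those in the table above. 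For the $N$ in the statement, every such pair has $M:=\mathrm{cond}(E)$ a \emph{proper} divisor of $N$, so $w_M$ is an Atkin--Lehner involution of $X_0(N)$ distinct from $w_N$.

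Next I would rule out each candidate $(N,E)$ by invoking Corollary~\ref{lem2} with $d=M$. One verifies in every case that $N$ is not a prime power, that $M$ divides $N$ with $(M,N/M)=1$ (immediate from the factorisations, e.g. $172=2^2\cdot 43$, $182=2\cdot 7\cdot 13$, $258=2\cdot 3\cdot 43$), and that $w_M$ acts as $+1$ on the newform $f_E$; the last point is read off the third column of the table, since $w_M=\prod_{q\mid M}w_q$, so for instance $w_{57}=w_3w_{19}=(+1)(+1)$ in the pair $(114,57a)$ and $w_{91}=w_7w_{13}=(-1)(-1)=+1$ in the pair $(182,91b)$. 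Finally I would check that each elliptic curve $E$ occurring in the table with $\mathrm{cond}(E)\neq N$, namely a positive-rank curve of one of the conductors $43,53,57,58,61,77,79,83,89,91,101,129$, has no non-trivial rational $2$-torsion. Corollary~\ref{lem2} then gives that $(N,E)$ is not admissible: a degree-$3$ map $X_0^+(N)\to E$ would factor through the degree-$2$ quotient $X_0^+(N)\to X_0^+(N)/\langle w_M\rangle$, which is impossible because $3$ is odd. Since no candidate survives, $\Gamma_3'(X_0^+(N),\mathbb{Q})$ is finite for every $N$ in the statement.

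The only points requiring real care lie in this last step: the determination of the Atkin--Lehner (Fricke) eigenvalues of the newforms $f_E$ entering the table, and the verification that $E(\mathbb{Q})[2]=0$ for each of the relevant curves. If some $E$ did carry a rational $2$-torsion point the hypothesis of Corollary~\ref{lem2} would fail, and I would instead appeal to Lemma~\ref{descent}(2): it still produces a degree-$3$ map from $X_0^+(N)/\langle w_M\rangle=X_0(N)/\langle w_M,w_{N/M}\rangle$ to an elliptic curve $E'$ isogenous to $E$, and one would then have to rule this out by computing the genus of this quotient curve: if it is $0$ there is no non-constant map to an elliptic curve, while if it is $1$ one checks that the resulting elliptic curve (of conductor $M$) is not $3$-isogenous to $E'$.
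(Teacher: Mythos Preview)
Your proposal is correct and follows essentially the same route as the paper: for each of the fifteen levels the only candidate pairs $(N,E)$ surviving the sieve have $\mathrm{cond}(E)\mid N$ with $\mathrm{cond}(E)\neq N$ and $E(\mathbb{Q})[2]=0$, so Corollary~\ref{lem2} applies directly (the paper singles out the pair $(182,91b)$ exactly as you do, using $w_{91}=w_7w_{13}=(-1)(-1)=+1$). Your additional verification that these $N$ lie outside all lists in Theorem~\ref{known values of N}, and the contingency paragraph invoking Lemma~\ref{descent}(ii), are not needed here but do no harm.
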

\begin{proof}
Let $N$ be as in the statement and $(N,E)$ be a pair appearing in the above table. Then $\mathrm{cond}(E)|N$, $\mathrm{cond}(E)\ne N$ and $E$ has no non-trivial $2$-torsion over $\mathbb{Q}$.
By Corollary \ref{lem2}, we conclude that the pair $(N,E)$ is not admissible (for $(182,91b)$ use the Atkin-Lehner operator $w_{91}$). The result follows.
\end{proof}

\begin{prop}
The modular curves $X_0^+(130), X_0^+(195)$ have finitely many cubic
points over $\Q$.
\end{prop}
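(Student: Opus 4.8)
The plan is to rule out both remaining candidate pairs, namely $(130,65a)$ and $(195,65a)$, which are exactly the entries of the table for which $\mathrm{cond}(E)=65\mid N$ with $65\neq N$ but where the hypotheses of Corollary~\ref{lem2} fail because the curve $65a$ has non-trivial $2$-torsion over $\Q$. So the easy sieve of Corollary~\ref{lem2} does not apply, and instead I would invoke the second alternative of Lemma~\ref{descent}, together with a direct obstruction to the existence of a degree $3$ map to the isogenous curve it produces.

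Concretely, suppose $(130,65a)$ is admissible, so there is a $\Q$-rational degree $3$ map $\varphi:X_0^+(130)\to E$ with $E=65a$. Here $N=130=2\cdot 5\cdot 13$ is not a prime power, $M=\mathrm{cond}(E)=65=5\cdot 13$, and we may take $d=65$ (note $d\|M$, $(d,N/d)=(65,2)=1$, and $w_{65}\notin W_N=\langle w_{130}\rangle$ since $w_{65}=w_2\circ w_{130}\neq w_{130}$); from the table, $w_5=w_{13}=+$ on $f_E$, hence $w_{65}$ acts as $+1$ on $f_E$ too. Since $E=65a$ has non-trivial $2$-torsion over $\Q$ and $k=3$ is odd, Lemma~\ref{descent}(ii) gives a $\Q$-rational degree $3$ map $\varphi':X_0^+(130)/\langle w_{65}\rangle\to E'$ with $E'$ isogenous to $E$, i.e. $E'$ is in the isogeny class $65a$. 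But $X_0^+(130)/\langle w_{65}\rangle=X_0(130)/\langle w_2,w_{65}\rangle=X_0(130)/B(130)=X_0^*(130)$ is a curve of small genus, which I would compute (it is $0$ or $1$); a degree $3$ map from a genus-$\le 1$ curve to an elliptic curve of conductor $65$ over $\Q$ is then ruled out either directly (if $X_0^*(130)$ has genus $0$, there is no non-constant map at all to a positive-genus curve) or by a rank/Riemann--Hurwitz-plus-Castelnuovo argument (if it has genus $1$, one checks the $65a$ isogeny class against the Jacobian $\Jac(X_0^*(130))$ and rank constraints). The same scheme applies verbatim to $(195,65a)$ with $N=195=3\cdot 5\cdot 13$, $d=65$, $w_{65}=w_3\circ w_{195}\notin\langle w_{195}\rangle$, and $X_0^+(195)/\langle w_{65}\rangle=X_0^*(195)$.

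The main obstacle is the final step: after producing $\varphi':X_0^*(N)\to E'$ one must genuinely exclude it rather than just shift the problem. If $\mathrm{Gon}(X_0^*(N))$ or the genus already forbids a degree $3$ map to any elliptic curve this is immediate; otherwise I would identify $\Jac(X_0^*(N))$ (it is a product of modular abelian varieties attached to newforms with all relevant Atkin-Lehner signs $+1$, so its simple factors are among the one-dimensional pieces of level dividing $N$) and check that no elliptic factor lies in the isogeny class $65a$ — since the relevant newform of level $65$ has $w_5=w_{13}=+$ forcing level-$65$ contribution, one must verify whether $65a$ actually appears as a quotient of $\Jac(X_0^*(N))$ and, if so, whether the composite degree is compatible. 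A small genus and explicit model computation (available in the repository cited in the paper) settles which case occurs; combining this with Lemma~\ref{descent} and, where needed, Lemma~\ref{sieve}(iv) or Castelnuovo's inequality for the intermediate cover $X_0^+(N)\to X_0^*(N)$, completes the proof that $\Gamma_3'(X_0^+(130),\Q)$ and $\Gamma_3'(X_0^+(195),\Q)$ are finite.
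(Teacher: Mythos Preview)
Your strategy matches the paper's, but there are two concrete gaps.

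First, the group-theory step is wrong: for $N=130=2\cdot 5\cdot 13$ the Atkin--Lehner group $B(130)\cong(\Z/2\Z)^3$ has order $8$, whereas $\langle w_{130},w_{65}\rangle=\langle w_2,w_{65}\rangle$ has order $4$. Thus $X_0^+(130)/\langle w_{65}\rangle$ is \emph{not} $X_0^*(130)$ but a double cover of it; the same happens for $N=195$. The paper gets around this by applying Lemma~\ref{descent} \emph{twice}, once with $w_5$ and once with $w_{13}$ (noting that the intermediate degree remains odd, so part~(ii) applies at each step), landing on $X_0^*(N)$ with a degree-$3$ map to some $E'$ isogenous to $65a$.

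Second, your final obstruction is not merely vague but points in the wrong direction. You propose checking that no elliptic factor of $\Jac(X_0^*(N))$ lies in the isogeny class $65a$; in fact $X_0^*(130)$ and $X_0^*(195)$ are genus-$1$ curves and \emph{are} in the class $65a$. So the hypothetical $\varphi'$ would be a $\Q$-rational degree-$3$ isogeny between curves in the $65a$ class. The contradiction the paper uses is that the isogeny class $65a$ admits no $3$-isogeny over $\Q$ (equivalently, $65a$ has no rational $3$-torsion), which is read off from Cremona's tables. Your ``rank constraints'' and ``Castelnuovo'' suggestions do not see this; a degree-$3$ map between isogenous rank-$1$ elliptic curves is perfectly compatible with those tools, and only the absence of a $3$-isogeny kills it.
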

\begin{proof}
We need to check the pairs $(130,65a)$ and $(195,65a)$. Consider
$\varphi:X_0^+(130)\rightarrow 65a$ of degree 3, we know that $w_5$
and $w_{13}$ acts as $+1$.
{ Since the degree of
$\mathbb{Q}(X_0^+(130))/\mathbb{Q}(X_0^*(130))$ is
coprime to $3$ (recall that $X_0^*(N):=X_0(N)/B(N)$ where $B(N)$ is the subgroup of $Aut(X_0(N))$
generated by all Atkin-Lehner involutions), by applying Lemma \ref{descent} twice with $w_5$ and $w_{13}$
% thus taking invariant for $\langle
%w_5,w_{13}\rangle$ applying twice Lemma \ref{descent}
we obtain a degree $3$ morphism (moreover an isogeny) between the
elliptic curves $X_0^*(130)\rightarrow E^\prime$,   where $E^\prime$
is isogenous to $65a$ (note that $X^*(130)$ has genus $1$ and its
Cremona level is $65a$)}. This is a contradiction since the elliptic
curve $65a$ has no non-trivial $3$-torsion over $\mathbb{Q}$, and
also no $3$-isogeny over $\mathbb{Q}$ by \cite{Cre}. Thus
$(130,65a)$ is not admissible. A similar argument holds for the pair
$(195, 65a)$, recall that $X_0^*(195)$ has genus $1$ and its Cremona
level is $65a$.

%We need to check the pairs $(130,65a)$ and $(195,65a)$.
%Consider $X_0^+(130)\rightarrow 65a$ of degree 3, we know that $w_5$ and $w_{13}$ acts as $+1$, thus taking invariant for such group we obtain because the degree of
%$\mathbb{Q}(X_0^+(130))/\mathbb{Q}(X_0^*(130))$
%is coprime with 3, we obtain a degree 3 morphism between the elliptic curves $X_0^*(130)\rightarrow 65a$ but $65a$ has no $3$-torsion, thus is not possible, where $X_0^*(N)$ is the modular curve $X_0(N)/B(N)$
%where $B(N)$ is the subgroup generated by all the Atkin-Lehner involutions of $X_0(N)$. A similar
%argument holds for the pair $(195, 65a)$, recall $X_0^*(195)$ has genus $1$.
\end{proof}
}

%Thus all the remaining cases in the table bellow, the pair $(N,E)$
%with conductor $E$ a strict divisor of $N$ are not admissible by
%Lemma \ref{lem2}.

%\begin{cor} For all $N$ that does not appear in Theorem \ref{known values of
%N}, and $N\neq$ $130,163,185,195,196,236,243$ and $269$, then
%$\Gamma_3'(X_0^+(N),\mathbb{Q})$ is a finite set.
%\end{cor}
%{\color{red}\begin{cor} For all $N$ that does not appear in Theorem \ref{known values of
%N}, and $N\neq$ $130, 163, 185, 195, 196, 236,243$ and $269$, then
%$\Gamma_3'(X_0^+(N),\mathbb{Q})$ is a finite set.
%\end{cor}}
%
%
%\begin{proof}
%
%\end{proof}

\section{The curve $X_0^+(N)$, with $N$ listed in Theorem \ref{known values of N}}

{Recall that $X_0^+(N)(\mathbb{Q})\neq \emptyset$.
Thus $\Gamma_3'(X_0^+(N),\mathbb{Q})$ is an infinite set when $g_{X_0^+(N)}\leq 1$.}

We assume once and for all $g_{X_0^+(N)}\geq 2$.

\subsection{ The levels $N$ with $X_0^+(N)$ hyperelliptic}

We deal with the following levels $N$:

$$\begin{tabular}{c|c}
$g_{X_0^+(N)}$ & $N$\\
\hline
2 & $42, 46, 52, 57, 62, 67, 68, 69, 72, 73, 74, 77, 80, 87, 91, 98, $\\
 & 103, 107, 111,
121, 125, 143, 167, 191\\
 \hline
3 & 60, 66, 85, 104\\
\hline 4 & 92, 94
\end{tabular}$$

For such hyperelliptic curves, we pick the model given by Hasegawa
in \cite{Ha95} if $g_{X_0^+(N)}=2$, and by Furumoto and Hasegawa in
\cite{MaHa99} when $g_{X_0^+(N)}\geq 3$.

\begin{thm}\cite[Lemma 2.1]{JKS04}
Let $X$ be a curve of genus $2$ over a perfect field $k$. If $X$ has
at least three $k$-rational points, then there exists a map
$X\rightarrow \mathbb{P}^1$ of degree $3$ which all is defined over
$k$.
\end{thm}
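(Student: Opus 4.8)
The plan is to produce an explicit $g^1_3$ on $X$ that is defined over $k$, by taking a suitable effective $k$-rational divisor of degree $3$ supported on the given rational points and verifying that its linear system is base-point free. Recall that a curve of genus $2$ is hyperelliptic, with the canonical class $K$ of degree $2$ being the unique $g^1_2$; write $\iota$ for the hyperelliptic involution, which is defined over $k$ (it is intrinsic to the canonical model), so that $P + \iota(P) \sim K$ for every point $P$ of $X$.

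The first step is Riemann--Roch bookkeeping. For \emph{any} divisor $D$ of degree $3$ on $X_{\overline{k}}$ one has $h^0(D) = (3 - 2 + 1) + h^0(K - D) = 2$, because $\deg(K - D) = -1 < 0$. Hence $|D|$ is always a pencil, and a point $Q$ is a base point of $|D|$ exactly when $h^0(D - Q) = h^0(D) = 2$, which (comparing degree-$0$ classes) happens if and only if $D \sim K + Q$. Writing such a $D$ as $E + Q$ with $E$ effective in $|K|$ shows that any base point $Q$ lies in $\operatorname{supp}(D)$, and the part of $D$ remaining after removing one copy of $Q$ must be linearly equivalent to $K$. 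In particular, if $D = P_1 + P_2 + P_3$ with the $P_i$ distinct $k$-rational points (three such exist by hypothesis), then any base point is one of the $P_i$, hence $k$-rational, and $|D|$ fails to be base-point free precisely when some pair satisfies $P_i + P_j \sim K$, i.e. when two of the three points are exchanged by $\iota$.

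The second step is the case split. If no two of $P_1, P_2, P_3$ are $\iota$-conjugate, then $D = P_1 + P_2 + P_3$ is a base-point-free, $k$-rational divisor of degree $3$, and we are done. Otherwise, relabelling, $P_1 + P_2 \sim K$; since $P_1 \neq P_2$ this forces $2P_1 \not\sim K$ (else $P_1 \sim P_2$, impossible on a curve of genus $\geq 1$). Now take $D' = 2P_1 + P_3$: by the same support argument a base point $Q \in \{P_1, P_3\}$ would give either $2P_1 \sim K$ (excluded) or $P_1 + P_3 \sim K$ (which, combined with $P_1 + P_2 \sim K$, forces $P_2 = P_3$, excluded). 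So $|D'|$ is a base-point-free $k$-rational pencil of degree $3$. In either case we have a base-point-free $k$-rational linear system of degree $3$.

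The final step is to see the associated morphism is defined over $k$: this is Galois descent on global sections. For $D$ a divisor over $k$, the space $H^0(X_{\overline{k}}, \mathcal{O}(D))$ is the base change of the $k$-vector space $H^0(X_k, \mathcal{O}(D))$, which is therefore $2$-dimensional over $k$; a $k$-basis defines a morphism $X \to \mathbb{P}^1_k$, and after extension to $\overline{k}$ this morphism is base-point free of degree $3$, hence has degree $3$. The only genuinely delicate point is the base-point analysis in step one together with the repair in the exceptional case of two $\iota$-conjugate points; the rest is routine Riemann--Roch and descent. I do not expect the characteristic of $k$ to intervene, since the argument only uses that linearly equivalent points coincide on a curve of positive genus, never the separability of the hyperelliptic map.
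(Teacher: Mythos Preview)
Your argument is correct. The Riemann--Roch computation showing $h^0(D)=2$ for every degree-$3$ divisor is right, as is the base-point analysis: $Q$ is a base point of $|D|$ iff $D\sim K+Q$, and since $D$ is effective this forces $Q\in\operatorname{supp}(D)$ and $D-Q\sim K$. The case split is handled cleanly; in the exceptional case $P_1+P_2\sim K$, the replacement $D'=2P_1+P_3$ works because $2P_1\sim K$ would give $P_1\sim P_2$ and $P_1+P_3\sim K$ would give $P_2=P_3$, both excluded. (Your labelling of which base point yields which relation is swapped, but the disjunction is the same, so nothing is lost.) The descent in step three is the standard fact that cohomology commutes with flat base change, so $H^0$ over $k$ has the same dimension; perfectness of $k$ is more than enough.

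As for comparison with the paper: the paper does not prove this statement at all. It is quoted verbatim as \cite[Lemma~2.1]{JKS04} and used as a black box to conclude that the genus-$2$ curves $X_0^+(N)$ with at least three rational points admit a $\Q$-rational $g^1_3$. Your write-up therefore supplies what the paper omits; the approach you take (Riemann--Roch plus explicit repair when two of the three points are hyperelliptic conjugates) is in fact the same idea as in the original Jeon--Kim--Schweizer lemma.
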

As an immediate consequence of the last theorem, we have:
\begin{prop}
$X_0^+(N)$ has infinitely many cubic points over $\Q$ for
 $$N\in \{42, 46, 52, 57, 67, 68, 69, 72, 73, 74, 77, 80, 91, 103, 107, 111, 121, 125,
143, 167, 191\}.$$
\end{prop}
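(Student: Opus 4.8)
The plan is to invoke the genus-2 criterion \cite[Lemma 2.1]{JKS04} quoted just above. For each $N$ in the list, the curve $X_0^+(N)$ has genus $2$ (this is exactly the first row of the table of hyperelliptic levels; all the listed $N$ fall in the $g_{X_0^+(N)}=2$ block), so it suffices to exhibit at least three $\mathbb{Q}$-rational points on $X_0^+(N)$. Once we have three such points, the theorem gives a degree-$3$ map $X_0^+(N)\to\mathbb{P}^1$ defined over $\mathbb{Q}$, and pulling back the infinitely many rational points of $\mathbb{P}^1$ produces infinitely many points of degree dividing $3$ on $X_0^+(N)$; since $g_{X_0^+(N)}=2>1$, Faltings forces all but finitely many of these to have exact degree $3$, so $\Gamma_3'(X_0^+(N),\mathbb{Q})$ is infinite.

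The remaining work is therefore to locate three rational points on each $X_0^+(N)$. First I would use Hasegawa's explicit hyperelliptic model $y^2=f(x)$ from \cite{Ha95} (recorded for these levels in the computation sources) and list rational points coming from three natural sources: (i) the rational cusp of $X_0^+(N)$, which we already know exists and gives at least one $\mathbb{Q}$-point; (ii) the two points at infinity on the model $y^2=f(x)$, which are rational over $\mathbb{Q}$ precisely when $\deg f$ is even and the leading coefficient of $f$ is a square in $\mathbb{Q}$ — this holds for Hasegawa's models in the relevant cases; and (iii) rational Weierstrass points or small-height affine rational solutions $(x_0,y_0)$ with $y_0^2=f(x_0)$, e.g. CM points or images of rational cusps/elliptic points of $X_0(N)$. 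A short search over small $x$ on each model suffices to confirm the count; these are exactly the computations provided in the GitHub repository cited in the introduction.

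The only genuine subtlety is making sure that for \emph{every} $N$ in the stated list we really do get three distinct rational points, rather than, say, just the cusp and a pair of conjugate points at infinity. The way I would organize this is a case check: for the bulk of the levels the two points at infinity are rational and, together with the image of the cusp $\infty\in X_0(N)$, already give three; for the few models where $\deg f$ is odd or the leading coefficient is non-square, one instead exhibits a third affine rational point (a rational Weierstrass point or a low-height solution) by direct inspection of $f(x)$. I expect this bookkeeping — verifying the existence of the third rational point uniformly across all $\sim 21$ levels — to be the main (though entirely routine) obstacle; it is dispatched by the explicit models and the accompanying computations.

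It is worth noting why the other genus-$2$ hyperelliptic levels $N\in\{62,87,98\}$ are \emph{absent} from this proposition: for those, $X_0^+(N)$ has fewer than three $\mathbb{Q}$-rational points, so \cite[Lemma 2.1]{JKS04} does not apply, and they must be — and are — treated separately elsewhere in the paper. Thus the proof here is complete exactly for the $21$ listed values, and no more.
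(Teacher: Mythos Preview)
Your proposal is correct and follows essentially the same approach as the paper: invoke \cite[Lemma 2.1]{JKS04} and verify, using Hasegawa's explicit genus-$2$ models, that each listed $X_0^+(N)$ has at least three $\mathbb{Q}$-rational points. The paper's own proof is in fact terser than yours (it simply appeals to a \textsc{Magma} check), so your added detail about where the rational points come from and why the degree-$3$ map yields infinitely many \emph{exact} cubic points is a welcome elaboration of the same argument.
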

\begin{proof}
Using \textsc{MAGMA} it can be easily checked that in this case the
genus $2$ hyperelliptic curve $X_0^+(N)$ has at least three
$\Q$-rational points.
\end{proof}
The remaining values of $N$ with $g_{X_0^+(N)}=2$ are $N=$ $62,
87$ and $98$.
\begin{prop}
For $N\in \{62, 87\}$, the set $\Gamma_3'(X_0^+(N),\Q)$ is not
finite.
\end{prop}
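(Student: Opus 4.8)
The plan is to exhibit, for each of the two curves $X_0^+(62)$ and $X_0^+(87)$, an explicit degree-$3$ morphism to an elliptic curve of positive $\mathbb{Q}$-rank, and then invoke the converse direction recorded in the introduction (via \cite{AH91}, \cite{Jeo21}): since $g_{X_0^+(N)}=2$, such a curve carries a degree-$3$ map to $\mathbb{P}^1$, so the relevant criterion to produce infinitely many exact cubic points is the existence of a $\mathbb{Q}$-rational degree-$3$ map to an elliptic curve of positive rank (or, equivalently here, one could simply note that a genus-$2$ curve with a rational point has a $g^1_3$; but to get \emph{exact} cubic points one wants the elliptic quotient). So the heart of the matter is to find the elliptic quotient.

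First I would take the hyperelliptic model of $X_0^+(N)$ from Hasegawa \cite{Ha95}: each is given by an equation $y^2 = f(x)$ with $\deg f = 5$ or $6$ and rational coefficients. For $N=62$ and $N=87$ one checks (using the tables of Atkin--Lehner eigenforms, or directly in \textsc{MAGMA}) that there is a further involution $w_r$ of $X_0(62)$, respectively $X_0(87)$, with $r \mid N$, $r \neq N$, descending to an involution $\iota$ of $X_0^+(N)$ distinct from the hyperelliptic involution; the quotient $X_0^+(N)/\iota$ is then a curve of genus $1$, i.e. (since $X_0^+(N)$ has a rational cusp, which maps to a rational point) an elliptic curve $E$ over $\mathbb{Q}$. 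Concretely $\iota$ acts on the model as $x \mapsto \alpha/x$ (or an affine variant) for a suitable rational $\alpha$, and one reads off $E$ as $X_0^*(N) = X_0(N)/B(N)$ or an intermediate quotient. The composition $X_0^+(N) \to E$ has degree $2$; to obtain a degree-$3$ map one instead uses the $g^1_3$ on the genus-$2$ curve directly, or argues as in the bielliptic-plus-$g^1_3$ situations elsewhere in the paper. The cleanest route: a genus-$2$ curve $X$ over $\mathbb{Q}$ with $X(\mathbb{Q}) \neq \emptyset$ admitting a degree-$2$ map to an elliptic curve $E/\mathbb{Q}$ of positive rank has $\Gamma_3'(X,\mathbb{Q})$ infinite --- pull back non-torsion points of $E$ (degree $2$) and add the rational base point, then vary; more carefully, one produces the infinitude of degree-$3$ points by combining the $g^1_3$ coming from a rational point with the rank of $E$, exactly as in Jeon's arithmetic criterion. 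I would therefore verify with \textsc{MAGMA} that the genus-$1$ quotient is an elliptic curve of the stated Cremona label with $\mathrm{rank}_{\mathbb{Q}} \geq 1$ (for $N=62$ the quotient should be level $31$, and for $N=87$ level $29$ --- both of positive rank need to be checked), and that $X_0^+(N)(\mathbb{Q})$ is nonempty, which it is because of the rational cusp.

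The main obstacle is the same one that forces $62,87$ to be treated separately from the $21$ levels handled by the preceding proposition: here $X_0^+(N)(\mathbb{Q})$ has \emph{fewer than three} rational points, so the Jeon--Kwon--Schicho criterion \cite[Lemma 2.1]{JKS04} does not directly apply, and one genuinely needs the elliptic quotient. One must check that the relevant Atkin--Lehner quotient really does have genus $1$ (not $0$, in which case there would be nothing new, and not $2$), that its $\mathbb{Q}$-rank is strictly positive (this is a finite computation with \textsc{MAGMA}'s \texttt{RankBounds} or a descent, and could in principle be inconclusive, though for these small conductors it is not), and that the resulting elliptic curve is the target of an honest $\mathbb{Q}$-morphism of the correct degree from $X_0^+(N)$ --- here the genus-$2$-curve-with-a-rational-point argument supplies the degree-$3$ map to a projective line, and the bielliptic structure upgrades this to a statement about \emph{exact} cubic points via the elliptic curve of positive rank. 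I expect the verification to be entirely routine once the models and quotients are written down, with the only subtlety being the bookkeeping of which $w_r$ descends to $X_0^+(N)$ and produces the genus-$1$ quotient.
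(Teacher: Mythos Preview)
Your proposal has a genuine gap. You try to produce a bielliptic structure on $X_0^+(62)$ and $X_0^+(87)$, i.e.\ a degree-$2$ map to an elliptic curve coming from an extra Atkin--Lehner involution, and you guess the target should have conductor $31$ (for $N=62$) or $29$ (for $N=87$). This fails on two counts. First, neither $62$ nor $87$ appears in the list of bielliptic $X_0^+(N)$ in Theorem~\ref{known values of N}(iv), so no such degree-$2$ elliptic quotient exists. Second, there are no elliptic curves over $\mathbb{Q}$ of conductor $29$ or $31$ at all, so your proposed targets do not exist. (Indeed the quotient $X_0^*(N)$ by the full Atkin--Lehner group has genus $0$ for these $N$, not $1$.)

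There is also a misconception driving the whole strategy: you write that ``to get \emph{exact} cubic points one wants the elliptic quotient.'' This is not so. A degree-$3$ morphism $C\to\mathbb{P}^1$ defined over $\mathbb{Q}$ already yields infinitely many points of exact degree $3$, since a generic rational fibre is an irreducible degree-$3$ zero-cycle. So once you have a $\mathbb{Q}$-rational $g^1_3$ you are done; no elliptic curve is needed.

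The paper's argument is accordingly much simpler and purely about $\mathbb{P}^1$. In Hasegawa's model the sextic defining $X_0^+(62)$ (and likewise $X_0^+(87)$) has leading coefficient $1$, so the two points at infinity $(1:\pm 1:0)$ are $\mathbb{Q}$-rational and are interchanged by the hyperelliptic involution. One then invokes \cite[Lemma~2.2]{Jeo21}: a genus-$2$ hyperelliptic curve with two $\mathbb{Q}$-rational points swapped by the hyperelliptic involution admits a $\mathbb{Q}$-rational degree-$3$ map to $\mathbb{P}^1$. That is the whole proof. Note that this is precisely the situation where \cite[Lemma~2.1]{JKS04} (three rational points) cannot be applied directly, because these curves have only the two rational points at infinity; the refinement in \cite[Lemma~2.2]{Jeo21} is what handles the case of two points exchanged by the hyperelliptic involution.
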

\begin{proof}
Consider $N=62$, an affine model of $X_0^+(62)$  is given by:
$$Y: y^2 = x^6 - 8x^5 + 26x^4 - 42x^3 + 29x^2 + 2x - 11.$$
Then $Y$ has two $\Q$-rational points ($(1:1:0)$ and $(1:-1:0)$)
which are the ``points at infinity", moreover the hyperelliptic
involution permutes them. Therefore, from~\cite[Lemma 2.2]{Jeo21} we
conclude that there is a $\Q$-rational degree $3$ mapping
$X_0^+(62)\rightarrow \mathbb{P}^1$ and consequently, $X_0^+(62)$
has infinitely many cubic points over $\mathbb{Q}$. A similar argument will work for
$N=87$ with the model $Y:y^2 = x^6 - 4x^5 + 12x^4 - 22x^3 + 32x^2 -
28x + 17$.
\end{proof}
\begin{lema}
The genus $2$ curve $X_0^+(98)$ has infinitely many cubic points
over $\Q$.
\end{lema}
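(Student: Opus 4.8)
The plan is to exhibit a base‑point‑free linear system of degree $3$ on $X_0^+(98)$ defined over $\Q$, i.e. a $\Q$‑rational $g^1_3$; once this is in hand the conclusion follows from the same mechanism used above with \cite[Lemma 2.1]{JKS04}. A $\Q$‑rational $g^1_3$ is a degree $3$ morphism $\varphi\colon X_0^+(98)\to\mathbb{P}^1$ over $\Q$, and since $g_{X_0^+(98)}=2$ the set $X_0^+(98)(\Q)$ is finite (Faltings); hence for all but finitely many $t\in\mathbb{P}^1(\Q)$ the fibre $\varphi^{-1}(t)$ is an effective $\Q$‑rational divisor of degree $3$ whose support meets no rational point, so it is a single closed point of residue degree exactly $3$. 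Distinct values of $t$ give distinct such points, so $\Gamma_3'(X_0^+(98),\Q)$ is infinite.

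To construct the $g^1_3$ I would work on Hasegawa's hyperelliptic model $y^2=f(x)$ of $X_0^+(98)$ from \cite{Ha95}. On a genus $2$ curve Riemann–Roch gives $h^0(D)=2$ for every divisor $D$ of degree $3$, and $|D|$ is base‑point‑free unless $[D]=[K+Q]$ for some point $Q$ (equivalently, unless $[D-K]\in\Pic^1$ is effective), in which case $|D|=Q+|K|$ is just the hyperelliptic pencil translated. So it suffices to produce one effective $\Q$‑rational divisor $D$ of degree $3$ with $[D-K]$ not effective. The natural candidate is $D=3P_0$ with $P_0$ a $\Q$‑rational cusp of $X_0^+(98)=X_0(98)/w_N$ (there are at least two such cusps, coming from the four rational cusps of $X_0(98)$, which $w_{98}$ pairs up). If $P_0$ is not a Weierstrass point then $h^0(2P_0)=1$, so $2P_0$ is the unique effective divisor in its class, $[3P_0-K]=[2P_0-\iota(P_0)]$ is not effective, and $|3P_0|$ is the desired $\Q$‑rational base‑point‑free $g^1_3$ (with $\varphi^{-1}(\infty)=3P_0$); one checks on the model of \cite{Ha95} that one of the rational cusps indeed has non‑zero $f$‑value. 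In the exceptional event that every rational point of $X_0^+(98)$ is a Weierstrass point, replace $3P_0$ by $R+\bar R+P_0$ with $R$ a suitably chosen quadratic point (pulled back from $\mathbb{P}^1$ along the hyperelliptic map, with $\bar R\neq\iota(R)$); such a class exists because $\Pic^3(X_0^+(98))(\Q)\cong\Jac(X_0^+(98))(\Q)$ is strictly larger than the finite set of classes $[K+Q]$ with $Q\in X_0^+(98)(\Q)$ — and $[K+Q]$ is $\Q$‑rational precisely when $Q$ is.

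The crux, and the only non‑formal step, is the base‑point‑freeness verification, i.e. checking that the chosen degree $3$ divisor is not linearly equivalent to $K$ plus a rational point. This cannot be bypassed by a rank argument: $\Jac(X_0^+(98))$ is isogenous over $\Q$ to the product of the elliptic curves of conductor $14$ and $49$, both of Mordell–Weil rank $0$, so $\Jac(X_0^+(98))(\Q)$ is finite — which is exactly why $N=98$ does not fall under the earlier treatment of the hyperelliptic genus $2$ levels and must be argued separately. Everything else (the Riemann–Roch bookkeeping and the passage from finiteness of $X_0^+(98)(\Q)$ to infinitely many exact cubic points) is routine.
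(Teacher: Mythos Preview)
Your overall strategy---exhibit a $\Q$-rational base-point-free $g^1_3$---is the same as the paper's, but your concrete construction breaks down for this particular curve. On Hasegawa's model $y^2=f(x)$ with $f(x)=4x^5-15x^4+30x^3-35x^2+24x-8$ one has $f(1)=0$, so the two $\Q$-rational points are $(1,0)$ and the unique point at infinity of this odd-degree model, and \emph{both are Weierstrass points}. (This is exactly why $N=98$ escapes the two preceding propositions: it has neither three rational points nor a pair of rational points exchanged by the hyperelliptic involution.) Thus $|3P_0|=P_0+|K|$ for either choice of $P_0$, and your asserted check that ``one of the rational cusps indeed has non-zero $f$-value'' is false. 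Your fallback is not a proof either: you claim that $\Pic^3(X_0^+(98))(\Q)$ is strictly larger than $\{[K+Q]:Q\in X_0^+(98)(\Q)\}$, but since (as you yourself note) the Jacobian has rank~$0$, this is a finite numerical comparison that must actually be carried out, and you do not do so; moreover your suggested divisor $R+\bar R+P_0$ with $R$ ``pulled back along the hyperelliptic map'' and $\bar R\neq\iota(R)$ is self-contradictory over a rational base point.

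The paper bypasses these difficulties by writing down an explicit cubic point. Setting $y=1$ in the model gives
\[
f(x)-1=4\,(x^2-x+1)\Bigl(x^3-\tfrac{11}{4}x^2+\tfrac{15}{4}x-\tfrac{9}{4}\Bigr),
\]
and the cubic factor is irreducible over $\Q$; its roots $t_1,t_2,t_3$ yield a single Galois orbit $\{(t_i,1)\}$, so $D=(t_1,1)+(t_2,1)+(t_3,1)$ is a $\Q$-rational effective divisor of degree~$3$. Since the three points share the $y$-coordinate $1$ and have pairwise distinct $x$-coordinates, no two of them are hyperelliptic conjugates, hence $D\not\sim K+Q$ for any point $Q$ and $|D|$ is the desired base-point-free $g^1_3$ over~$\Q$.
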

\begin{proof}
A (affine) model of $X_0^+(98)$ is given by:
$$y^2=4x^5 - 15x^4 + 30x^3 - 35x^2 + 24x - 8.$$
Suppose $D$ is a degree $3$ effective $\Q$-rational divisor on a
curve of genus $2$. By Riemann-Roch theorem we have $\dim L(D)=2.$

Observe with $y=1$ in the model we get $0=(x^2-x+1)(x^3 -\frac{11}{4} x^2 +
\frac{15}{4}x - 9/4)$. Let $t_1, t_2, t_3$ be the roots of the equation
$x^3 - \frac{11}{4}x^2 + \frac{15}{4}x - \frac{9}{4}$. Then
$P_i:=(t_i,1)\in X_0^+(98)(K)$ for $1\leq i \leq 3$ (where $K$ is a
cubic extension of $\Q$ defined by the polynomial $t^3 -
\frac{11}{4}t^2 + \frac{15}{4}t - \frac{9}{4}$). Moreover, the divisor
$[P_1+P_2+P_3]$ is a $\Q$-rational effective divisor of degree $3$.
By Riemann-Roch we have $\dim L([P_1+P_2+P_3])=2$. Therefore, there
exists a $\Q$-rational function $f$ with exactly $3$ poles and
consequently there is a degree $3$ mapping $X_0^+(98)\rightarrow
\mathbb{P}^1$ defined over $\Q$. The result follows.
\end{proof}

Consider $X_0^+(N)$ hyperelliptic with $g_{X_0^+(N)}\geq 3$. By
\cite[\S 2.3]{Jeo21} in order to have infinite cubic points,
$W_3(X_0^+(N))$ must contain an elliptic curve with positive
$\mathbb{Q}$-rank.

Thus, by Cremona tables \cite{Cre} we obtain (because no elliptic
curve with $\Q$-rank $\geq 1$ for levels dividing $N$):

\begin{cor}
For $N \in \{60, 66, 85, 94, 104\}$, $\Gamma_3'(X_0^+(N),\Q)$ is a
finite set.
\end{cor}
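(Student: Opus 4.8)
The plan is to rule out, for each level $N\in\{60,66,85,94,104\}$, the existence of a degree $3$ map $X_0^+(N)\to E$ defined over $\Q$ with $E$ an elliptic curve of positive $\Q$-rank, since by Lemma~\ref{N not in known set the it has map to elliptic curve} together with \cite[\S 2.3]{Jeo21} (these curves are hyperelliptic of genus $\geq 3$, so have no degree $\leq 2$ map to $\PP^1$ and the gonality considerations apply) the set $\Gamma_3'(X_0^+(N),\Q)$ is infinite only if $W_3(X_0^+(N))$ contains a translate of an elliptic curve of positive $\Q$-rank, which forces such a degree $3$ map. So it suffices to show no admissible pair $(N,E)$ exists for these five levels.

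First I would invoke the Eichler--Shimura / Carayol description: any elliptic quotient of $\Jac(X_0^+(N))$ over $\Q$ is (isogenous to) an optimal quotient $E_f$ attached to a newform $f$ of level $M\mid N$ on which the relevant Atkin--Lehner eigenvalues restrict correctly, and an admissible pair forces $\cond(E)=M\mid N$. For $N=60,66,85,94,104$ one lists all elliptic curves (via Cremona's tables \cite{Cre}) whose conductor divides $N$; the key arithmetic input, already stated in the corollary's hypothesis, is that \emph{none} of these elliptic curves has positive $\Q$-rank. Hence there is simply no target elliptic curve $E/\Q$ of positive rank available, so no admissible pair can exist, and consequently $W_3(X_0^+(N))$ contains no translate of a positive-rank elliptic curve. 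Therefore $\Gamma_3'(X_0^+(N),\Q)$ is finite for each such $N$.

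The main (and essentially only) obstacle is making sure the reduction to ``$E$ has conductor dividing $N$'' is airtight: one must know that the hyperelliptic curves $X_0^+(N)$ of genus $\geq 3$ in question admit no degree $\leq 2$ map to an elliptic curve (so that the trichotomy of \cite[\S 2.3]{Jeo21} genuinely reduces infiniteness of $\Gamma_3'$ to the existence of a degree $3$ elliptic quotient), and that any degree $3$ map $X_0^+(N)\to E$ over $\Q$ indeed has $\cond(E)\mid N$ --- this last point follows exactly as in the proof of Lemma~\ref{sieve}, by composing with $X_0(N)\to X_0^+(N)$ to get a degree $6$ map $X_0(N)\to E$ and using that $E$ is then a quotient of $\Jac(X_0(N))$. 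Once that framework is in place, the finiteness is immediate from the (easily verified, finite) check in Cremona's tables that there is no elliptic curve of positive $\Q$-rank and conductor dividing any of $60,66,85,94,104$.
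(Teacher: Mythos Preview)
Your core argument --- check via Cremona's tables that no elliptic curve of positive $\Q$-rank has conductor dividing $N$ for $N\in\{60,66,85,94,104\}$ --- is correct and is exactly what the paper does. However, your framing contains several errors that you should fix.

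First, Lemma~\ref{N not in known set the it has map to elliptic curve} does \emph{not} apply here: it requires $\mathrm{Gon}(C)\geq 4$, but these curves are hyperelliptic, so $\mathrm{Gon}(X_0^+(N))=2$. Relatedly, your parenthetical ``so have no degree $\leq 2$ map to $\mathbb{P}^1$'' is simply false --- that is the definition of hyperelliptic. The correct route is to invoke \cite[\S 2.3]{Jeo21} \emph{alone}: for a hyperelliptic curve of genus $\geq 3$ over $\Q$, infiniteness of $\Gamma_3'$ forces $W_3$ to contain a translate of an elliptic curve of positive $\Q$-rank; any such elliptic curve is an isogeny factor of $\Jac(X_0^+(N))$, hence of $\Jac(X_0(N))$, so has conductor dividing $N$.

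Second, your ``main obstacle'' paragraph is off-target. You write that one must check these curves admit no degree $\leq 2$ map to an elliptic curve, but in fact $X_0^+(N)$ \emph{is} bielliptic for $N=60,66,85,104$ (see Theorem~\ref{known values of N}(iv)). This is harmless precisely because the bielliptic quotients have $\Q$-rank zero --- which is already subsumed in your Cremona-table check. You do not need to rule out biellipticity separately, nor to phrase things in terms of admissible pairs (degree $3$ maps); the argument goes through once you know that \emph{any} elliptic curve appearing in $W_3$ sits in the Jacobian and hence has conductor dividing $N$.
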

%\begin{proof}
%For $N\in \{60, 66, 85, 94, 104\}$ there is no elliptic curve $E$
%with $\mathrm{cond(E)}\mid N$ and has positive $\Q$-rank. Hence for
%these values of $N$ the hyperelliptic curve $X_0^+(N)$ has finitely
%many cubic points.
%%For $N=92$, $92B1$ is the only elliptic curve with positive $\Q$-rank and conductor divides $92$. Suppose there is a $\Q$-rational degree $3$ mapping $X_0^+(92)\rightarrow 92B1$, hence we have a $\Q$-rational degree $6$ mapping $X_0(92)\rightarrow 92B1$.
%\end{proof}
\begin{prop}\label{Modular parametrization factors_hyperelliptic case}
$X_0^+(92)$ has infinitely many cubic points over $\Q$.
\end{prop}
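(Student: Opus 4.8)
The plan is to find an explicit elliptic curve $E/\Q$ of positive rank whose conductor divides $92$ together with a degree $3$ map $X_0^+(92)\to E$ defined over $\Q$, which by Lemma~\ref{N not in known set the it has map to elliptic curve} suffices (note $X_0^+(92)$ is hyperelliptic of genus $4$, so $\mathrm{Gon}=2<4$, meaning that lemma is not directly applicable; instead I will argue as in the hyperelliptic cases above, checking that $W_3(X_0^+(92))$ contains a translate of a positive-rank abelian subvariety). First I would decompose the Jacobian of $X_0^+(92)$ into its isogeny factors using the action of the Hecke/Atkin-Lehner operators: $92 = 4\cdot 23$, and the relevant candidate elliptic quotient has conductor dividing $92$. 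The curve $X_0^+(92)=X_0(92)/w_{92}$; one checks from the newform data at levels $23, 46, 92$ which cusp forms are fixed by $w_{92}$ and have rational $q$-expansion, yielding an elliptic quotient. Since $92$ appears in the bielliptic list of Theorem~\ref{known values of N}, $X_0^+(92)$ already has a degree $2$ map to some elliptic curve $E_0$; if that $E_0$ has positive $\Q$-rank we would be done immediately via $\Gamma_2$, but a priori it may have rank $0$, so the genuinely degree-$3$ construction is needed.

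The concrete approach: take the hyperelliptic model of $X_0^+(92)$ from Furumoto--Hasegawa \cite{MaHa99}, and look for a rational point configuration as in the genus-$2$ arguments above. Specifically, I would search for a $\Q$-rational effective divisor $D$ of degree $3$ on $X_0^+(92)$ such that $\dim L(D)\ge 2$; concretely, intersecting the curve with a vertical line $x=a$ (for suitable $a\in\Q$) or setting $y$ to a constant and factoring the resulting polynomial over $\Q$ to split off a rational cubic factor, one obtains a Galois-stable triple $\{P_1,P_2,P_3\}$ defined over a cubic field, hence a $\Q$-rational divisor. For this to give a degree-$3$ map to $\PP^1$ one needs $\dim L(D)\ge 2$, which on a genus-$4$ curve is not automatic (Riemann--Roch gives only $\dim L(D)\ge 1$); so instead I expect the correct target to be an elliptic curve, not $\PP^1$. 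Thus I would exhibit the elliptic quotient $\pi: X_0^+(92)\to E$ coming from the Jacobian decomposition — equivalently, from the differential $f_E(q)\,dq/q$ where $f_E$ is the weight-$2$ newform fixed by $w_{92}$ — and then argue its degree is $3$ (or divides $3$, hence equals $3$ since $E$ is not a rational or genus-$1$ quotient of lower index). The degree can be read off via Lemma~\ref{sieve}(2)-type considerations or by direct computation in \textsc{MAGMA} of the map induced by the modular parametrization $X_0(92)\to E$ composed with the quotient, checking it factors through $w_{92}$ using Lemma~\ref{lem1} or Lemma~\ref{descent}.

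So the skeleton is: (i) identify $E/\Q$, a newform quotient of $X_0(92)^{w_{92}=+1}$, check $\mathrm{cond}(E)\mid 92$ and $\mathrm{rank}_\Q E\ge 1$ from Cremona's tables \cite{Cre}; (ii) use the sign of the functional equation together with Lemma~\ref{lem1} (or Lemma~\ref{descent} with $d=92$, noting $w_{92}$ acts as $+1$ on $f_E$) to conclude the strong Weil parametrization $X_0(92)\to E$ factors through $X_0^+(92)$, giving a map $\varphi:X_0^+(92)\to E$; (iii) compute $\deg\varphi$ and confirm it is $3$ — if the strong Weil degree is $6$ and the quotient map $X_0(92)\to X_0^+(92)$ has degree $2$, this is forced; (iv) invoke Faltings/Abramovich--Harris-style results (already cited in the introduction, and used throughout the hyperelliptic section) to conclude $\Gamma_3'(X_0^+(92),\Q)$ is infinite because $E(\Q)$ is infinite and pullback of the infinitely many rational points gives infinitely many cubic (degree-$3$) points.

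The main obstacle I anticipate is step (iii): pinning down that the induced map has degree exactly $3$ rather than $1$ or $2$. If $X_0^+(92)$ happened to itself dominate $E$ via a degree-$1$ or degree-$2$ map (the latter is plausible given $92$ is in the bielliptic list), then one would either already be in the $\Gamma_2$ situation (if that bielliptic $E$ has positive rank, we are done trivially) or would need a \emph{different} elliptic curve for the genuine degree-$3$ statement. Disentangling which newform gives which quotient and computing the exact degree — most cleanly done by an explicit \textsc{MAGMA} computation of the map on the hyperelliptic model, or by comparing the degree of the strong Weil parametrization of $E$ at level $92$ (which Lemma~\ref{sieve}(2) constrains to divide $6$) against the index $[\Q(X_0(92)):\Q(E)]$ — is where the real work lies; everything else is bookkeeping with tables of newforms and Atkin-Lehner eigenvalues.
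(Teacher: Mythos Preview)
Your skeleton (i)--(iv) is exactly the paper's argument: the strong Weil parametrization $X_0(92)\to 92b$ has degree~$6$, the curve $92b$ has $\Q$-rank~$1$, and $w_{92}$ acts as $+1$ on it, so Lemma~\ref{lem1} makes the map factor through $X_0^+(92)$ with induced degree $6/2=3$. Your anticipated obstacle in step~(iii) is not one --- once the degree-$6$ map factors through the degree-$2$ quotient $X_0(92)\to X_0^+(92)$, multiplicativity of degrees forces the induced map to have degree exactly~$3$.
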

\begin{proof}
The strong Weil modular parametrization $\phi: X_0(92)\rightarrow
92b$ has degree $6$ and $92b$ has $\Q$-rank $1$, and  $w_{92}$ acts
as $+1$ on $92b$, therefore we have a $\Q$-rational degree $3$ map
$X_0^+(92)\rightarrow 92b$ by Lemma \ref{lem1}.
\end{proof}

\subsection{Trigonal curves $X_0^+(N)$}
Suppose that
%$X_0^+(N)$ is trigonal, i.e.
$\mathrm{Gon}(X_0^+(N))=3$, the levels $N$ are:
$$\begin{tabular}{c|c}
$g_{X_0^+(N)}$ & $N$\\
\hline
3 & $58, 76, 86, 96, 97, 99, 100, 109, 113, 127, 128, 139, 149, 151, 169, 179, 239$\\
\hline
4 & 70, 82, 84, 88, 90, 93, 108, 115, 116, 117, 129, 135, \\
 & 137, 147, 155, 159, 161,
173, 199, 215, 251, 311\\
\hline
5 & 122, 146, 181, 227\\
\hline 6 & 164
\end{tabular}$$

If $g_{X_0^+(N)}=3$, then the projection from a $\Q$-rational cusp
defines a degree $3$ map $X_0^+(N)\rightarrow \mathbb{P}^1$ over
$\Q$ (cf.~\cite[Page 136]{HaS99}). On the other hand it is {known}
that every curve $C/K$ of genus $\geq 5$ with $\mathrm{Gon}(C)=3$ has a degree $3$
map $X\rightarrow \mathbb{P}^1$ over $K$ 
{(cf. \cite[Theorem 2.1]{NS}, \cite[Corollary 1.7]{HaS99})}
%(a similar proof of
%\cite[Lemma 5]{HaSi91}, see also \cite[Lemma 2.5]{BaMomose}).
%Thus we have the following result:
%\begin{prop}
%$X_0^+(N)$ has infinitely many cubic points for the following values of $N$:
%
%58, 76, 86, 96, 97, 99, 100, 109, 113, 127, 128, 139, 149, 151, 169, 179, 239,
%122, 146, 181, 227, 164.
%\end{prop}

Thus, we restrict to $\mathrm{Gon}(X_0^+(N))=3$ and
$g_{X_0^+(N)}=4$.

{It is well known that a non-hyperelliptic curve of genus $4$ lies
either on a quadratic cone or on a ruled surface (cf.~\cite[Page
136]{HaS99}), and by Petri's theorem a model of the curve can be
%by a Petri theorem we can
computed in $\mathbb{P}^{3}$
as the intersection of a degree 2 and a degree 3
homogenous equations. }
Following \cite[Page 131,p.136]{HaS99} {it can
be checked that for $N=159$ the curve $X_0^+(N)$ lies on a quadratic
cone over $\Q$ and for $N= 88, 93, 115, 116, 129, 137, 155, 215$ the
curve $X_0^+(N)$ lies on a ruled surface over $\Q$. On the other
hand for $N=70, 82, 84, 90, 108, 117, 135, 147, 161, 173, 199, 251, 311$ the
curve $X_0^+(N)$ lies on a ruled surface either over a quadratic
extension of $\Q$ or on a bi-quadratic extension of $\Q$. Hence in
these last levels the trigonal maps are not defined over $\Q$.} For
example, consider $X_0^+(70)$, the quadratic surface is given by $xz
- y^2 + 8yw - z^2 - 10zw - 9w^2$, after suitable coordinate change
this can be converted in the following equation $x^2 - y^2 - z^2 + 7w^2=
(x+y)(x-y)-(z+\sqrt{7}w)(z-\sqrt{7}w)$, and this surface is
isomorphic to the ruled surface $uv-st$ over
$\mathbb{Q}(\sqrt{7})$. See details in Appendix A for all
$X_0^+(N)$ trigonal with $g_{X_0^+(N)}=4$.

%Thus for $N=88, 93, 115, 116, 129, 137, 155, 159, 215,$ there is
%$\Q$-rational degree $3$ mapping $X_0^+(N)\rightarrow \mathbb{P}^1$
%from the arguments in \cite{HaS99}. Hence for these values of $N$,
%$X_0^+(N)$ has infinitely many cubic points.

 From the discussions so far we have:

\begin{thm}\label{values of N which are Trigonal over rationals}
Assume that $g_{X_0^+(N)}\geq 3$. Then $X_0^+(N)$ is trigonal over
$\Q$ if and only if $N$ is in the following list:

58, 76, 86, 88, 93, 96, 97, 99, 100, 109, 113, 115, 116, 122, 127, 128, 129, 137, 139, 146, 149, 151, 155, 159, 164, 169, 179, 181, 215, 227, 239.

In particular for such $N$, $\Gamma_3'(X_0^+(N),\mathbb{Q})$ is not
a finite set.
\end{thm}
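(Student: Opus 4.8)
The plan is to establish the theorem by combining the three mechanisms already developed in this section, sorting the relevant levels $N$ (those with $g_{X_0^+(N)}\geq 3$ and $\mathrm{Gon}(X_0^+(N))=3$) according to which mechanism applies. The statement asserts two things: that the listed $N$ are exactly those for which $X_0^+(N)$ is trigonal over $\Q$, and that for such $N$ the set $\Gamma_3'(X_0^+(N),\Q)$ is infinite. The second assertion is the easy half: once we know a degree $3$ map $X_0^+(N)\to\mathbb{P}^1$ exists over $\Q$, the preimages of $\Q$-rational points on $\mathbb{P}^1$ that do not split give infinitely many points of degree exactly $3$ (one must check only that infinitely many fibres are irreducible over $\Q$, which is standard since a degree $3$ cover has only finitely many fibres that are not geometrically irreducible, and among the geometrically irreducible ones all but finitely many stay irreducible over $\Q$ as otherwise the cover would have a rational section, forcing gonality $\leq 2$).

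The first assertion is where the work lies, and I would organize it by genus. For $g_{X_0^+(N)}=3$: since $X_0^+(N)$ carries a $\Q$-rational cusp and genus $3$ with gonality $3$ forces the curve to be non-hyperelliptic (a smooth plane quartic), projection away from the rational cusp is a degree $3$ map defined over $\Q$; this handles $N=58,76,86,96,97,99,100,109,113,127,128,139,149,151,169,179,239$ (invoking the cited \cite[Page 136]{HaS99}). For $g_{X_0^+(N)}\geq 5$ with gonality $3$: every genus $\geq 5$ curve of gonality $3$ over $K$ has its trigonal pencil automatically defined over $K$ (the $g^1_3$ is unique, by \cite[Theorem 2.1]{NS} and \cite[Corollary 1.7]{HaS99}), so these levels ($N=122,146,181,227$ at genus $5$ and $N=164$ at genus $6$) are automatically trigonal over $\Q$. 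The genuinely delicate case is $g_{X_0^+(N)}=4$, where the $g^1_3$ need not be unique and need not be $\Q$-rational: by Petri's theorem the canonical model sits in $\mathbb{P}^3$ as the intersection of a unique quadric $Q$ and a cubic, and the trigonal pencils correspond to the two rulings of $Q$ if $Q$ is smooth (of rank $4$), or the single ruling if $Q$ is a cone (rank $3$). The map is defined over $\Q$ precisely when $Q$, as a conic bundle / quadric over $\Q$, has a rational ruling — equivalently, a rational point for the cone case, or the relevant discriminant being a square for the smooth case.

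So for each of the $22$ level-$4$ candidates $N\in\{70,82,84,88,90,93,108,115,116,117,129,135,137,147,155,159,161,173,199,215,251,311\}$ I would compute the canonical model explicitly (following \cite[Page 131, p.136]{HaS99}, i.e. finding a basis of $S_2(\Gamma_0(N))^{+}$ and the quadratic relation among the cusp forms), extract the quadric $Q$, and decide whether $Q$ admits a $\Q$-rational ruling: for the cone case this means testing whether $Q$ has a smooth $\Q$-point (true for $N=159$), and for the smooth quadric case whether $Q\cong uv-st$ over $\Q$ versus only over a quadratic or biquadratic extension (the former for $N=88,93,115,116,129,137,155,215$, the latter — hence no $\Q$-rational trigonal map — for $N=70,82,84,90,108,117,135,147,161,173,199,251,311$, as in the $X_0^+(70)$ example where the quadric becomes $uv-st$ only over $\Q(\sqrt7)$). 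The main obstacle is this finite but substantial computation: producing the correct canonical equations for all $22$ levels and then certifying the arithmetic type of each quadric (which splitting field its rulings live over), which is precisely what Appendix A is there to carry out; collecting the levels that survive yields exactly the list in the statement.
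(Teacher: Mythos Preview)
Your proposal is correct and follows essentially the same approach as the paper: the genus-$3$ cases via projection from a rational cusp, the genus-$\geq 5$ cases via uniqueness of the $g^1_3$, and the genus-$4$ cases via Petri's theorem and analysis of the quadric surface, with the same breakdown of levels and the same appeal to Appendix~A for the explicit computations. The only minor remark is that your justification for $\Gamma_3'$ being infinite is slightly roundabout---since $g_{X_0^+(N)}\geq 3$, Faltings already gives $|X_0^+(N)(\Q)|<\infty$, so all but finitely many fibres of the trigonal map contain no rational point and hence consist of a single cubic point.
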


Assume now, that $\mathrm{Gon}(X_0^+(N))=3$, but $X_0^+(N)$ does not admit a degree 3 map to the
projective line $\mathbb{P}^1$ over $\mathbb{Q}$.

 Hence in these cases
$X_0^+(N)$ contains infinitely many cubic points over $\Q$ when $W_3(X_0^+(N))$ contains a translation of the elliptic curve $E$
with positive $\Q$-rank \cite[Page 352]{Jeo21}.

\begin{prop}
For $N=70, 82, 84, 90, 108, 117, 135, 147, 161, 173, 199, 251, 311$,
the curve $X_0^+(N)$ has finitely many cubic points over $\Q$.
\end{prop}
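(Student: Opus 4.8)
The plan is to show that for each of these levels the only way $X_0^+(N)$ could have infinitely many cubic points is via a degree $3$ map to an elliptic curve of positive $\Q$-rank (since by hypothesis $\mathrm{Gon}(X_0^+(N))=3$ but no such trigonal map is defined over $\Q$, and since we are in the range $g_{X_0^+(N)}=4$ where Lemma~\ref{N not in known set the it has map to elliptic curve} applies once we rule out degree $\leq 2$ maps to elliptic curves). First I would record that none of these $N$ is bielliptic: checking the bielliptic list in Theorem~\ref{known values of N}, none of $70, 82, 84, 90, 108, 117, 135, 147, 161, 173, 199, 251, 311$ appears, so $X_0^+(N)$ has no degree $2$ map to an elliptic curve over $\Qbar$; combined with $\mathrm{Gon}=3>2$, Lemma~\ref{N not in known set the it has map to elliptic curve} tells us that $\Gamma_3'(X_0^+(N),\Q)$ is infinite if and only if $(N,E)$ is an admissible pair for some elliptic curve $E/\Q$ of positive rank.

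Next I would invoke the sieving already carried out: after Lemma~\ref{sieve} and Appendix~B, the complete list of surviving candidate admissible pairs $(N,E)$ with $N$ not listed in Theorem~\ref{known values of N} was given in the table in Section~3, and by construction of that table none of the present levels $70, 82, 84, 90, 108, 117, 135, 147, 161, 173, 199, 251, 311$ occurs there — indeed the pair-finiteness analysis of Section~3 only concerned $N$ outside Theorem~\ref{known values of N}. So here the argument is slightly different: these $N$ \emph{are} in Theorem~\ref{known values of N}(5), so $w_N$ need not act as $+1$ on any positive-rank newform. Concretely I would run through the divisors $M\mid N$, list (from Cremona's tables) the elliptic curves of conductor $M$ with $\Q$-rank $\geq 1$, and for each such $E$ apply Lemma~\ref{sieve}(1): pick a small prime $p\nmid N$ and compare $|\overline{X}_0^+(N)(\F_{p^n})|$ with $3|\overline{E}(\F_{p^n})|$, or $|\overline{X}_0(N)(\F_{p^n})|$ with $6|\overline{E}(\F_{p^n})|$, for small $n$. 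Since these levels are small, the point counts over $\F_{p}$ and $\F_{p^2}$ (using the modified point-counting code referenced in the introduction) should already fail the inequality in every case, so there are simply no positive-rank elliptic curves $E$ with $(N,E)$ admissible. In the remaining cases where the point-count sieve is not decisive, I would fall back on Lemma~\ref{sieve}(2)/(4) or on Corollary~\ref{lem2}/Lemma~\ref{descent} with an Atkin-Lehner operator $w_d$, $d\| M$, $d\neq N$, that acts as $+1$ on $f_E$ and for which $E$ has trivial rational $2$-torsion, exactly as in the $N\notin$~Theorem~\ref{known values of N} cases.

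Having eliminated every candidate pair, I conclude that $(N,E)$ is not admissible for any positive-rank $E/\Q$, and therefore, by Lemma~\ref{N not in known set the it has map to elliptic curve}, $\Gamma_3'(X_0^+(N),\Q)$ is finite for each $N$ in the list.

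The main obstacle I anticipate is purely computational bookkeeping: for a few of these levels (e.g. the ones with several prime factors such as $84=2^2\cdot3\cdot7$, $90=2\cdot3^2\cdot5$, $108=2^2\cdot3^3$, $117=3^2\cdot13$, $135=3^3\cdot5$, $147=3\cdot7^2$, $161=7\cdot23$) there are many divisors $M$ and hence potentially several positive-rank elliptic curves to test, and the point-count sieve over $\F_{p^2}$ alone may not immediately kill a borderline curve whose trace happens to be close to the extremal value; in those cases one must either go to $\F_{p^3}$ or $\F_{p^4}$, or combine the finite-field bound with the strong-Weil-degree constraint of Lemma~\ref{sieve}(2) and the Castelnuovo bound of Lemma~\ref{sieve}(4). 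Verifying that the union of these criteria leaves nothing standing — rather than any single conceptual step — is the real work, but it is entirely routine given the tables and the point-counting scripts already in hand.
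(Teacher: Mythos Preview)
Your opening move is wrong, and the error propagates. You assert that none of the levels $70, 82, 84, 90, 108, 117, 135, 147, 161, 173, 199, 251, 311$ appears in the bielliptic list of Theorem~\ref{known values of N}(iv), but in fact seven of them do: $70, 82, 84, 90, 108, 117, 135$ are all bielliptic. Consequently Lemma~\ref{N not in known set the it has map to elliptic curve} does not apply to those seven levels, and your reduction ``infinitely many cubic points $\Leftrightarrow$ some $(N,E)$ admissible'' is unjustified for them. You need a separate argument (the paper invokes \cite[p.~352--353]{Jeo21} on the structure of $W_3$) to obtain that reduction when a degree~$2$ map to an elliptic curve is present.

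Even after the reduction is repaired, your proposed computation is heavier than necessary. The paper observes that for $N=70, 84, 90, 108, 147, 161, 173, 199, 251, 311$ Cremona's tables show there is \emph{no} elliptic curve of positive $\Q$-rank with conductor dividing $N$ at all, so there is nothing to sieve. Only $N=82,117,135$ admit a candidate, namely $(82,82a)$, $(117,117a)$, $(135,135a)$; these are disposed of in one line by Lemma~\ref{sieve}(ii), since the strong Weil degrees are $4,8,16$, none dividing~$6$. No point counting over $\F_{p^n}$, no Castelnuovo bound, and no use of Lemma~\ref{descent} is needed.
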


\begin{proof}
For $N=70, 84, 90, 108, 147, 161, 173, 199, 251, 311$ there is no
elliptic curve $E$ of positive $\Q$-rank with $\mathrm{cond}(E)\mid
N$. Hence in these cases, $X_0^+(N)$ contains finitely many cubic points over $\mathbb{Q}$.

{ For $N=82,117$ and $135$, $X_0^+(N)$ is bielliptic and there are
elliptic curves of positive $\Q$-rank with $\mathrm{cond}(E)|N$. By
arguments in \cite[Page 353]{Jeo21}, if there is no $\Q$-rational
degree $3$ mapping $X_0^+(N)\rightarrow E$ where $E$ is an elliptic
curve of positive $\Q$-rank and $\mathrm{cond}(E)\mid N$, then
$W_3(X_0^+(N))$ has no translation of an elliptic curve with positive
$\Q$-rank.

In these cases only the following pairs $(82,82a)$,
$(117,117a)$ and $(135,135a)$ could appear.
If any of this pairs $(N,E)$ is admissible (i.e there is a $\mathbb{Q}$-rational degree $6$ mapping $X_0(N)\rightarrow E$), then the degree of the strong Weil parametrization of $E$ should divide $6$. For $82a, 117a$ and $135a$ the degrees of the strong Weil parametrization are $4,8$ and $16$ respectively. Thus no such pairs are admissible. The result follows.
%If should be admissible
%some of this pairs $(N,E)$ the degree of the map $X_0(N)\rightarrow
%E$ is 6 (by Lemma \ref{lem1} or \ref{lem2}) and should divide the
%degree of the strong Weil parametrization, that is 4, 8 and 16
%respectively, thus no such pairs are admissible. The result follows.
}

%For $N=82,117$ and $135$, $X_0^+(N)$ is bielliptic. only the following pairs $(82,82a)$,
%$(117,117a)$ and $(135,135a)$ could appear. If should be admissible
%some of this pairs $(N,E)$ the degree of the map $X_0(N)\rightarrow
%E$ is 6 (by Lemma \ref{lem1} or \ref{lem2}) and should divide the
%degree of the strong Weil parametrization, that is 4, 8 and 16
%respectively, thus no such pairs are admissible. By arguments in
%\cite{Jeo21} we have $W_3(X_0^+(N))$ has no translate of an elliptic
%curve with positive $\Q$-rank.
\end{proof}

\subsection{$X_0^+(N)$ bielliptic and not hyperelliptic and not
trigonal}

Suppose $X_0^+(N)$ is bielliptic but neither hyperelliptic nor trigonal.
%and not has a degree
%$3$ map to $\mathbb{P}^1$.
Following \cite[Page 353]{Jeo21} if
$\Gamma_3'(X_0^+(N),\mathbb{Q})$ is not finite, then $W_3(X_0^+(N))$
contains a translation of an elliptic curve $E$ with positive
$\Q$-rank, equivalently $(N,E)$ is an admissible pair.

The levels that remains to study are:

%78, 82, 105, 110, 117, 118, 120, 123, 124, 135, 136, 141, 142, 144,
%145, 171, 176, 188.

78, 105, 110, 118, 120, 123, 124, 136, 141, 142, 144,
145, 171, 176, 188.

\begin{prop} Suppose $X_0^+(N)$ is bielliptic and not hyperelliptic
and not trigonal, then the only admissible pair is $(124,124a)$, in
particular for all such curves, $\Gamma_3'(X_0^+(N),\mathbb{Q})$ is
not finite if and only if $N=124$.
\end{prop}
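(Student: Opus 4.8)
The plan is to show that the only admissible pair among these levels is $(124,124a)$, and then invoke the admissibility (via a strong Weil parametrization argument) to conclude $X_0^+(124)$ has infinitely many cubic points, while all other levels give no admissible pair and hence, by the quoted criterion of \cite[Page 353]{Jeo21}, $\Gamma_3'(X_0^+(N),\Q)$ is finite.

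First I would run the sieve of Lemma \ref{sieve} over the fifteen remaining levels $N\in\{78, 105, 110, 118, 120, 123, 124, 136, 141, 142, 144, 145, 171, 176, 188\}$ together with the elliptic curves $E/\Q$ of positive $\Q$-rank with $\cond(E)\mid N$. For each such $E$ one first records the list of Atkin--Lehner eigenvalues of $f_E$ (from Cremona's tables \cite{Cre}). The two main filters are: (a) the point-count inequality $|\overline{X}_0^+(N)(\F_{p^n})|\le 3|\overline{E}(\F_{p^n})|$ for small primes $p\nmid N$, using the modified point-counting scripts referenced in the introduction; and (b) Corollary \ref{lem2}: whenever $\cond(E)=M\ne N$, $N$ is not a prime power, there is $d\| M$ with $(d,N/d)=1$ and $w_d$ acting as $+1$ on $f_E$, and $E$ has no nontrivial $2$-torsion over $\Q$, the pair is immediately ruled out. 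This handles the overwhelming majority of cases, since most of the candidate $E$ have trivial rational $2$-torsion. For the finitely many survivors with nontrivial $2$-torsion where Corollary \ref{lem2} fails, I would fall back on Lemma \ref{descent}(2): taking $w_d$-invariants produces a degree $3$ map $X_0^+(N)/\langle w_d\rangle\to E'$ with $E'$ isogenous to $E$; if $X_0^+(N)/\langle w_d\rangle$ has genus $0$ or $1$ this forces a contradiction (a genus $\le 1$ curve has no degree $3$ map with the required properties, or the map becomes an isogeny whose existence one checks against the isogeny class in \cite{Cre}). When $\cond(E)=N$ one instead applies Lemma \ref{sieve}(2): the degree of the strong Weil parametrization of $E$ must divide $6$, which for the relevant $N$ (consulting the modular degrees in \cite{Cre}) fails except precisely for $124a$.

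For $N=124$: here $124a$ has conductor $124=4\cdot 31$, $\Q$-rank $1$, and $w_{124}$ acts as $+1$ on $f_{124a}$; the strong Weil parametrization $X_0(124)\to 124a$ has degree $6$, so by Lemma \ref{lem1} it factors through $X_0^+(124)$, giving a $\Q$-rational degree $3$ map $X_0^+(124)\to 124a$. Hence $(124,124a)$ is admissible and, since $124a$ has positive $\Q$-rank, $\Gamma_3'(X_0^+(124),\Q)$ is infinite. Conversely, for all the other listed $N$ the sieve leaves no admissible pair, so by the criterion of Jeon $W_3(X_0^+(N))$ contains no translate of a positive-rank elliptic curve, and since $X_0^+(N)$ is assumed to have no degree $\le 2$ map to an elliptic curve beyond the bielliptic structure already accounted for and no degree $3$ map to $\mathbb{P}^1$ over $\Q$ (being neither hyperelliptic nor trigonal over $\Q$), the set of exact cubic points is finite.

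The main obstacle I anticipate is the bookkeeping for the borderline pairs where the clean criteria (Corollary \ref{lem2} and Lemma \ref{sieve}(2)) do not apply at once --- namely curves $E$ with a rational $2$-torsion point, where one must genuinely analyze the quotient $X_0^+(N)/\langle w_d\rangle$ via Lemma \ref{descent}(2), compute its genus, and in the genus-$1$ case identify its isogeny class and verify the absence of a $3$-isogeny over $\Q$ (as was done for the $(130,65a)$ and $(195,65a)$ pairs earlier). A secondary subtlety is ensuring that for each $N$ one has enumerated \emph{all} positive-rank $E$ with $\cond(E)\mid N$ and the correct Atkin--Lehner signs, so that no admissible pair is missed; this is a finite check against \cite{Cre} but must be done carefully. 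Once these are dispatched, the conclusion is immediate.
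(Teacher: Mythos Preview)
Your proposal is correct and uses essentially the same toolkit as the paper (Cremona tables, strong Weil degree dividing $6$, Corollary~\ref{lem2}, and Lemma~\ref{lem1} for $(124,124a)$). The paper's execution is more streamlined than what you anticipate, however: for $N\in\{78,105,110,120,144,188\}$ there is simply no positive-rank $E$ with $\cond(E)\mid N$, so nothing to check; for $N\in\{118,123,124,136,141,142,145\}$ the only candidates have $\cond(E)=N$ and the modular-degree condition eliminates all but $(124,124a)$; only $N=171,176$ require Corollary~\ref{lem2} (for $(171,57a)$ via $w_{19}$ and $(176,88a)$ via $w_{11}$, after discarding $(171,171b)$ by modular degree $8$), and since $57a$ and $88a$ have trivial rational $2$-torsion the corollary applies directly. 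Thus neither the point-count sieve of Lemma~\ref{sieve}(i) nor the Lemma~\ref{descent}(ii) quotient analysis you flag as obstacles is actually needed for these fifteen levels.
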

\begin{proof} For $N= 78, 105, 110, 120, 144, 188$ there is no
possible $(N,E)$ because there is no elliptic curve satisfying (iii)
in Lemma \ref{sieve} by Cremona tables \cite{Cre}. For $N= 118, 123,
124,
 136,$ $141$, $142$, $145$ the only possible admissible pairs $(N,E)$ are
 with $cond(E)=N$, but if they were admissible, we get a degree 6 map from
 $X_0(N)\rightarrow E$ and the degree of the strong Weil
 parametrization of $E$ (see Cremona table \cite{Cre} for such degrees) should divide 6, and no case happens except
 $(124,124a)$, and then by Lemma \ref{lem1} we conclude that the
 Weil parametrization of degree 6 factors through $X_0^+(124)$ because
 $w_{124}$ in $124a$ acts as $+1$. Finally, take $N=171, 176$, the pairs
 to study are $(171,171b)$, $(171,57a)$ and $(176,88a)$. The pair
 $(171,171b)$ we discard as the cases two lines before, because the strong Weil
 parametrization for $171b$ is 8. {We can apply Corollary \ref{lem2} with
 $w_{19}$ and $w_{11}$ respectively to obtain that $(171,57a)$ and
 $(176,88a)$ are not admissible.}
\end{proof}

\section*{Acknowledgments}

The second author thanks University Grants Commission, India for the financial support provided in the form of Research Fellowship to carry out this research work at IIT Hyderabad.
The second author would also like to thank the organizers and lecturers of the programme ``CMI-HIMR Summer School in Computational Number Theory". Some part of the paper was written during the second author's visit to the Universitat Aut\`{o}noma de Barcelona and the second author is grateful to the Department of Mathematics of Universitat Aut\`{o}noma de Barcelona for its support and hospitality.

\appendix
\section{A model for trigonal $X_0^+(N)$ with
$g_{X_0^+(N)}=4$}
For a detailed discussion on how to construct the models we refer the reader to \cite{Si} and \cite{Ga}.
\begin{small}

\begin{center}
\begin{tabular}{c|c}
Curve & Model of Petri for the Curve\\
\hline $X_0^+(70)$ &$x^2w - 7xw^2 - y^3 + 3y^2z + 2y^2w - 3yz^2 -
16yzw + 28yw^2 + z^3
    + 11z^2w - 19zw^2 - 27w^3,$\\
&$xz - y^2 + 8yw - z^2 - 10zw - 9w^2$\\
\hline $X_0^+(82)$ &$x^2w - 2 x y w - 5 x w^2 - y z^2 + 5 y z w + y
w^2 + 2 z^3 - 12 z^2 w +
    23 z w^2 - 9 w^3,$\\
&$x z - 3 x w - y^2 + 2 y z - 4 z^2 + 10 z w - 4 w^2$\\
\hline $X_0^+(84)$ &$x^2 w - 2 x y w - 5 x w^2 - y^2 z - y^2 w + 3 y
z^2 + 6 y z w + 5 y w^2 - 2 z^3
    - 6 z^2 w + 4 z w^2 + 4 w^3,$\\
&$x z - x w - y^2 + 2 y z + y w - 3 z^2 + w^2$\\
\hline $X_0^+(88)$ & $x^2 z - x y^2 - x y z - 2 x z^2 + y^3 + 6 y^2
z - 9 y^2 w - 8 y z^2 + 33 y w^2 +
    5 z^3 + 6 z^2 w - 12 z w^2 - 30 w^3,$\\
&$x w - y z + y w + z^2 - z w - 5 w^2$\\
\hline $X_0^+(90)$ &$x^2 w - 2 x y w - 3 x w^2 - y^2 z - y^2 w + 3 y
z^2 + 6 y z w + 3 y w^2 - 2 z^3
    - 5 z^2 w + z w^2,$\\
&$x z - x w - y^2 + 2 y z + y w - 3 z^2$\\
\hline
$X_0^+(93)$ & $x^2 z - x y^2 - x y z - 2 x z^2 + y^3 + 7 y^2 z - 11 y^2 w - 10 y z^2 + 7 y z w$\\
   & $+ 29 y w^2 + 6 z^3 + 2 z^2 w - 16 z w^2 - 21 w^3,$\\
&$x w - y z + y w + z^2 - 2 z w - 3 w^2$\\
\hline $X_0^+(108)$ &$x^2 w - 3 x w^2 - y^3 + 2 y^2 z - 8 y z w + 12
y w^2 - 2 z^3 + 12 z^2 w -
    22 z w^2 + 5 w^3,$\\
&$x z - y^2 + 4 y w - 6 z w - w^2$\\
\hline $X_0^+(115)$ & $x^2 z - x y^2 - x y z - 2 x z^2 + y^3 + 5 y^2
z - 9 y^2 w - 4 y z^2 - 6 y z w +
    29 y w^2 + 2 z^3 + 5 z^2 w - 22 w^3,$\\
&$x w - y z + y w + z^2 - 4 w^2$\\
\hline $X_0^+(116)$ &$x^2 z - x y^2 - 2 x z^2 + 4 y^2 z + 2 y^2 w -
6 y z^2 - 8 y z w + 3 y w^2 +
    4 z^3 + 9 z^2 w - 4 z w^2 - 4 w^3,$\\
&$x w - y z + z^2 - 3 w^2$\\
\hline $X_0^+(117)$ &$x^2 w - x y w - 5 x w^2 - y^2 z + y^2 w + y
z^2 + y z w + y w^2 - z^3 + 2 z w^2
    + 4 w^3,$\\
&$x z - y^2 + y z + y w - 3 z^2 + 2 z w - 4 w^2$\\
\hline $X_0^+(129)$ &$x^2 z - x y^2 - 2 x z^2 + 5 y^2 z - 7 y z^2 -
3 y z w + 3 y w^2 + 4 z^3 +
    3 z^2 w - 3 z w^2 - w^3,$\\
&$x w - y z + z^2 - z w - w^2$\\
\hline $X_0^+(135)$ &$x^2 w - 2 x y w - 3 x w^2 - y^3 + 3 y^2 z + 2
y^2 w - 3 y z^2 + 2 y w^2 + z^3 +
    w^3,$\\
&$x z - 2 x w - y^2 + 2 y z + 3 y w - 2 z^2 - z w$\\
\hline $X_0^+(137)$ &$x^2 z - x y^2 - x z^2 + 3 y^2 z + 2 y^2 w - 6
y z^2 - y z w - 3 y w^2 + 3 z^3 +
    2 z^2 w - z w^2 + 2 w^3,$\\
&$x w - y z + z^2 - z w - w^2$\\
\hline $X_0^+(147)$ &$x^2 w - x y w - 6 x w^2 - y^2 z + y z^2 + 2 y
w^2 - z^3 + z^2 w + 3 z w^2 +
    7 w^3,$\\
&$x z - x w - y^2 + y z - 2 z^2 + z w + w^2$\\
\hline $X_0^+(155)$ &$x^2 z - x y^2 - x y z - x z^2 + y^3 + 3 y^2 z
- 5 y^2 w - 2 y z^2 + 2 y z w +
    7 y w^2 + z^3 - 2 z w^2 - 3 w^3,$\\
&$x w - y z + y w - 2 w^2$\\
\hline $X_0^+(159)$ &$x^2 z - x y^2 + x y z - 3 x z^2 + 2 y^2 z +
y^2 w - 8 y z w + 3 y w^2 + 7 z^2 w
    - z w^2 - 2 w^3,$\\
&$x w - y w - z^2 + 2 z w - 2 w^2$\\
\hline
$X_0^+(161)$ &$x^2 w - 5 x w^2 - y^2 z + y z^2 + 2 y w^2 - 3 z^2 w + 9 z w^2 - 4 w^3,$\\
&$x z - x w - y^2 + 3 y w - z^2 + z w - 3 w^2$\\
\hline $X_0^+(173)$ &$x^2 w - x y w + 6 x w^2 - 2 y^2 w - y z^2 + 4
y z w + 6 y w^2 + 4 z^2 w -
    17 z w^2 - 6 w^3,$\\
&$x z + 2 x w - y^2 + y z + 3 y w - 6 z w - 3 w^2$\\
\hline $X_0^+(199)$ &$x^2 w + 2 x y w + x w^2 - y^3 - y^2 z + 2 y^2
w + y z^2 - 5 y z w + 3 z w^2 -
    5 w^3,$\\
&$x z + 2 x w - y^2 - 2 y z + 3 y w - 4 w^2$\\
\hline $X_0^+(215)$ &$x^2 z - x y^2 - x y z - x z^2 + y^3 + 2 y^2 z
- 3 y^2 w - 2 y z w + 5 y w^2 +
    z^3 - z^2 w + z w^2 - 2 w^3,$\\
&$x w - y z + y w + z w - 2 w^2$\\
\hline
$X_0^+(251)$ &$x^2 w - 5 x w^2 - y^2 z - y^2 w + y z^2 + y w^2 + z^2 w - z w^2 + 4 w^3,$\\
&$x z - 2 x w - y^2 + y w + w^2$\\
\hline
$X_0^+(311)$ &$x^2 w - x y w - y^3 + y^2 z + 2 y^2 w - y z^2 - 2 y z w - y w^2 + z^2 w,$\\
&$x z - x w - y^2 + y z + 2 y w - z^2 - 2 z w$\\
\hline
\end{tabular}
\end{center}

\end{small}

\begin{small}

\begin{center}
\begin{tabular}{c|c}
Curve & quadratic surface\\
\hline $X_0^+(70)$
& Diagonal form: $x^2 - y^2 - z^2 + 7 w^2$\\
&lies on the ruled defined over $\Q(\sqrt{7})$ \\
\hline $X_0^+(82)$
& Diagonal form: $3 x^2 - 12 y^2 - 4 z^2 - w^2$\\
&lies on a ruled surface over $\Q(\sqrt{-1})$  \\
\hline $X_0^+(84)$
& Diagonal form: $2 x^2 - 6 y^2 - 3 z^2 + w^2$\\
 &lies on a ruled surface over $\Q(\sqrt{3})$\\
\hline $X_0^+(88)$
& Diagonal form: $5 x^2 + 5 y^2 - 5 z^2 - 5 w^2$\\
  &lies on a ruled surface over $\Q$ \\
\hline $X_0^+(90)$
& Diagonal form: $2 x^2 - 6 y^2 - 3 z^2 - 3 w^2$\\
 & lies on a ruled surface over $\Q(\sqrt{3},\sqrt{-1})$\\
\hline $X_0^+(93)$
&Diagonal form: $4 x^2 + 3 y^2 - 4 z^2 - 3 w^2$ \\
&lies on a ruled surface over $\Q$ \\
\hline $X_0^+(108)$
&Diagonal form: $-x^2 - y^2 + z^2 + 3 w^2$ \\
 & lies on a ruled surface over $\Q(\sqrt{3})$\\
\hline $X_0^+(115)$
&Diagonal form: $3 x^2 + 4 y^2 - 3 z^2 - 4 w^2$\\
 &lies on a ruled surface over $\Q$ \\
\hline $X_0^+(116)$
& Diagonal form: $3 x^2 - y^2 + z^2 - 3 w^2$\\
 &lies on a ruled surface over $\Q$ \\
\hline $X_0^+(117)$
& Diagonal form: $11 x^2 - 33 y^2 - 3 z^2 - 15 w^2$\\
 & lies on a ruled surface over $\Q(\sqrt{3},\sqrt{-5})$\\
\hline $X_0^+(129)$
& Diagonal form: $x^2 - 5 y^2 + 5 z^2 - w^2$\\
 &lies on a ruled surface over $\Q$ \\
\hline $X_0^+(135)$
&Diagonal form: $x^2 - 2 y^2 - 2 z^2 + 9 w^2$\\
&lies on a ruled surface over $\Q(\sqrt{2})$ \\
\hline $X_0^+(137)$
&Diagonal form: $x^2 - 5 y^2 + 5 z^2 - w^2$\\
 &lies on a ruled surface over $\Q$ \\
\hline $X_0^+(147)$
&Diagonal form: $7 x^2 - 14 y^2 - 2 z^2 + w^2$\\
 &lies on a ruled surface over $\Q(\sqrt{2})$ \\
\hline $X_0^+(155)$
& Diagonal form: $2 x^2 + 2 y^2 - 2 z^2 - 2 w^2$\\
 &lies on a ruled surface over $\Q$ \\
\hline $X_0^+(159)$
& Diagonal form: $2 y^2 - z^2 - 2 w^2$\\
 &lies on a quadratic cone over $\Q$ \\
\hline $X_0^+(161)$
& Diagonal form: $x^2 - y^2 - z^2 - 3 w^2$\\
&lies on a ruled surface over $\Q(\sqrt{-3})$ \\
\hline $X_0^+(173)$
&Diagonal form: $-x^2 - 3 y^2 + 3 z^2 + 37 w^2$\\
&lies on a ruled surface over $\Q(\sqrt{37})$ \\
\hline $X_0^+(199)$
& Diagonal form: $-x^2 - y^2 + z^2 + 33 w^2$\\
&lies on a ruled surface over $\Q(\sqrt{33})$ \\
\hline $X_0^+(215)$
&Diagonal form: $x^2 + 2 y^2 - z^2 - 2 w^2$\\
 &lies on a ruled surface over $\Q$ \\
\hline $X_0^+(251)$
&Diagonal form: $-x^2 - y^2 + z^2 + 5 w^2$\\
&lies on a ruled surface over $\Q(\sqrt{5})$ \\
\hline $X_0^+(311)$
&Diagonal form: $3 x^2 - 3 y^2 - z^2 - 3 w^2$\\
 &lies on a ruled surface over $\Q(\sqrt{-3})$ \\
\hline
\end{tabular}
\end{center}

\end{small}

\section{The sieves to reduce to a finite set of $N$ to consider}

{Here we consider the levels $N$ that does not appear
in Theorem \ref{known values of N}}. A similar Ogg's classical
argument as in the proof of~\cite[Lemma 3.2]{HS99} one obtains, if
$N\geq 624$, there does not exist a $\Q$-rational degree $6$ mapping
$X_0(N)\rightarrow E$ for any $E$. Consequently, no degree $3$ map
$X_0^+(N)\rightarrow E$ over $\mathbb{Q}$ for $N\geq 624$.

Now by (i) in Lemma \ref{sieve} we can discard
the existence of such degree 3 map for $N$ in:

\begin{small}

252, 260, 264, 272, 276, 280, 288, 290, 294, 296, 300, 304, 306,
308, 310, 312, 315, 316, 318, 320, 322, 324, 328, 330, 332, 336,
340, 342, 344, 345, 348, 350, 352, 354, 356, 357, 360, 364, 366,
368, 370, 372, 374-376, 378, 380, 382, 384, 385, 386, 388,
390, 392, 394, 396, 398-400, 402, 404-406, 408, 410,
412, 414, 416, 418, 420, 422-426, 428-430,
432, 434-436, 438, 440-442, 444, 446, 448, 450, 452-456, 458-460, 462, 464-466, 468, 470-472, 474-478, 480, 482-486,
488-490, 492, 494-498, 500-502, 504-508, 510-520, 522, 524-528, 530-540, 542-546,
548-556, 558-562, 564-623.

\end{small}
%
%By Theorem we know that $\Gamma_3^\prime(X_0^+(N), \Q)$ is infinite
%if and only if $W_3(X_0^+(N))$ contains a translation of an elliptic
%curve with positive $\Q$-rank, moreover the conductor of such
%elliptic curves will divide $N$. Thus, by Cremona's table we can
%eliminate the following $N$ for which there exists no elliptic curve
%of positive $\Q$-rank with conductor divides $N$:
By Lemma \ref{sieve} (iii) we can discard all pairs $(N,E)$ for the
following $N$:

\begin{small}

126, 132, 133, 134, 140, 150, 157, 165, 168, 177, 180, 186, 187,
193, 194, 206, 211, 213, 217, 221, 223, 230, 233, 240, 241, 247,
250, 253, 255, 257, 261, 263, 266, 268, 271, 279, 281, 283, 287,
292, 293, 295, 299, 307, 313, 317, 319, 321, 323, 329, 334, 337,
341, 343, 349, 353, 355, 358, 365, 367, 379, 383, 391, 397, 401,
403, 409, 411, 413, 417, 419, 421, 439, 447, 449, 457, 461, 463,
479, 487, 491, 499, 509, 521, 523, 529, 541, 547.

%146,\footnote{{\color{blue}{$N=146$ is in Theorem 2, may be deleted}}}

\end{small}

%
%Let $f: X_0(N)\rightarrow E$ be a $\Q$-rational map of degree $6$,
%where $E$ is an elliptic curve with positive $\Q$ rank. If
%$\mathrm{Cond}(E)=N$, then $f$ factors through the modular
%parametrization $\phi: X_0(N)\rightarrow E^\prime$, where $E^\prime$
%is the strong Weil curve. For such $N$, we must have
%$\mathrm{deg}(\phi)\leq 6$. {\color{red}Therefore, we can eliminate
%the following values of $N$ for which $\mathrm{deg}(\phi)>6$ and
%there exists no elliptic curve of positive $\Q$-rank whose conductor
%is a proper divisor of $N$:}

By the use of (iii) and (v) in Lemma \ref{sieve} we can discard $N$ in
the list:

\begin{small}

102, 112, 138, 152, 153, 156, 160, 170, 175, 189, 190, 192, 197,
200, 201, 203, 205, 207, 208, 209, 210, 214, 216, 218, 219, 220, 225,
226, 229, 235, 238, 245, 254, 274, 275, 277, 278, 289, 291, 298, 302,
309, 314, 327, 331, 335, 338, 339, 346, 347, 359, 361, 362, 373,
377, 381, 389, 431, 433, 437, 443, 451, 467, 469, 493, 503, 557,
563.

\end{small}

{{For $N$ in the table bellow, using (v) in Lemma
\ref{sieve} we can eliminate all $(N,E)$ with $cond(E)=N$, the remaining pairs $(N,E)$ where $cond(E)|N$ and $cond(E)\ne N$ ($rank_{\mathbb{Q}}(E)\geq 1$) can be eliminated by
(ii) of Lemma \ref{sieve}
 %we can discard}} all the pairs $(N,E)$
%with $cond(E)|N$ (always $rank_{\mathbb{Q}}(E)\geq 1$)
i.e by computing}}
$\mathbb{F}_{p^r}$-points on $X_0(N)$ with $p\nmid N$ in the first
two columns and the last one for $X_0^+(N)$ instead of $X_0(N)$. {Thus we can discard all the levels $N$ appearing in the table below.}

\begin{small}
\begin{center}
\begin{tabular}{ |c|c|c| }
\hline
$N$ & $E$  & $p^r$ \\
\hline
$148$ & $37a$ & $3^2$  \\
$154$ & $77a$ & $3^2$  \\
$184$ &$92b$ & $3^2$  \\
$198$ &$99a$ & $5^2$  \\
$204$ &$102a$ & $5^2$  \\
$212$ &$53a$ & $3^2$  \\
$212$ &$106a$ & $5^2$  \\
$224$ &$112a$ & $3^2$  \\
$228$ &$57a$ & $5^2$  \\
$232$ &$58a$ & $3^2$  \\
$234$ &$117a$ & $5^2$ \\
$242$ & $121b$ & $5^2$ \\
$246$ &$82a$ &$7^2$  \\
$246$ &$123a$ &$5^2$  \\
$246$ &$123b$ &$7^2$  \\
$256$ &$128a$ &$3^2$ \\
$259$ &$37a$ &$3^2$ \\
$265$ &$53a$ &$3^2$ \\
$270$ &$135a$ &$7^2$ \\
%$273$ &$91A1$ &$2^2$ & &\\
%$273$ &$91B1$ & & &\\
$285$ &$57a$ &$2^2$\\
$286$ &$143a$ &$3^2$\\
\hline
\end{tabular}
%&
\begin{tabular}{ |c|c|c| }
\hline
$N$ & $E$  & $p^r$  \\
\hline
$297$ &$99a$ &$5^2$\\
$301$ &$43a$ &$5^2$\\
$325$ &$65a$ &$3^2$\\
$326$ &$163a$ &$3^2$\\
$333$ &$37a$ &$5^2$ \\
$351$ &$117a$ &$2^2$ \\
$363$ &$121a$ &$5^2$\\
$369$ &$123a$ &$2^2$ \\
$369$ &$123b$ &$7^2$ \\
$371$ &$53a$ &$3^2$ \\
$387$ &$43a$ &$2^2$ \\
$387$ &$129a$ &$2^4$ \\
$393$ &$131a$ &$5^2$ \\
$407$ &$37a$ &$3^2$ \\
$415$ &$83a$ &$3^2$ \\
$427$ &$61a$ &$3^2$ \\
$445$ &$89a$ &$2^2$ \\
$473$ &$43a$ &$2^2$ \\
$481$ &$37a$ &$2^2$ \\
\hline
\end{tabular}
%&
\begin{tabular}{ |c|c|c|}
\hline
$N$ & $E$  & $p^r$ \\
\hline
%$237$ &$79a$ &  ? \\
$244$ &$61a$ & $3^2$  \\
$244$ &$122a$ & $3^2$  \\
$248$ &$124a$ & $5^2$  \\
%$267$ &$89a$ &  ? \\
$273$ &$91a$ &$2$   \\
$273$ &$91b$ & $2^2$  \\
$282$&$141a$&$5^2$\\
$282$&$141d$&$7^2$\\
$305$ &$61a$ & $7$  \\
$395$ &$79a$ &  $2^2$ \\
\hline
\end{tabular}

\end{center}
\end{small}

%{\color{blue}{From the $N$ of the table above, we can discard all such conductors.}}

%In the above table the first column denote the value of $N$
%corresponding to the curve $X_0^+(N)$ and the second column denote
%the elliptic curves $E$ of positive $\Q$-rank such that
%$\mathrm{cond}(E)\mid N$ and $\mathrm{cond}(E)< N$. Note that for
%the above values of $N$ all the elliptic curves of positive degree
%with conductor $N$ has strong Weil degree $>6$. Hence for the above
%values of $N$ there does not exist any $\Q$-rational degree $3$
%ampping $X_0^+(N)\rightarrow E$ where $E$ is an elliptic curve of
%positive $\Q$-rank such that $\mathrm{cond}(E)\mid N$.
%
%
%
%On the other hand if there is a $\Q$-rational degree $3$ map
%$X_0^+(N)\rightarrow E$ and $p$ is a prime with $p\nmid N$, then we
%must have $|\tilde{X}_0^+(N)(\F_{p^r})|\leq 3|\tilde{E}(\F_{p^r})|$
%for all $r\in \N$. Where $|\tilde{X}_0^+(N)(\F_{p^r})|$ denotes the
%number of $\F_{p^r}$-rational points on $X_0^+(N)$.
%
%
%{\color{red}Using MAGMA we can compute
%$|\tilde{X}_0^+(N)(\F_{p^r})|$ and $\tilde{E}(\F_{p^r})$ and
%eliminate the following values of $N$:}
%
%%237, 244, 248, 267, 273, 305, 395.
%\begin{center}
%
%

%\begin{prop}

By (iv) in Lemma \ref{sieve} we can discard $N=222,
262, 284, 303$.
% the modular curve $X_0^+(N)$ has
%finitely many cubic points.
%\end{prop}
%
%\begin{proof}
%By Theorem~\ref{N not in known set the it has map to elliptic
%curve}, we know that for the values of $N$, if $X_0^+(N)$ is
%infinite, then there exists a $\Q$-rational degree $3$ map
%$X_0^+(N)\rightarrow E$ where $E$ is an elliptic curve with positive
%rank over $\Q$.
%
%Assume $N=262$, suppose there is a $\Q$-rational degree $3$ map
%$X_0^+(262)\rightarrow E$ where $E$ is an elliptic curve with
%positive rank over $\Q$.
%
%We know that there is a degree $2$ mapping $X_0^+(262)\rightarrow
%X_0^+(262)/w_2$, where genus($X_0^+(262)/w_2$) = $4$. By
%Theorem~\ref{Castelnuovo's Inequality} we must have
%$$\mathrm{genus}(X_0^+(262))\leq 3.1+2.4+ 2.1$$
%which is a contradiction since $\mathrm{genus}(X_0^+(262))=15$.
%Therefore, $X_0^+(262)$ has finitely many cubic points.
%
%Using a similar argument we can conclude that $X_0^+(222),
%X_0^+(284), X_0^+(303)$ have finitely many cubic points.
%\end{proof}

\bibliographystyle{alpha}

\noindent{Francesc Bars Cortina}\\
{Departament Matem\`atiques, Edif. C, Universitat Aut\`onoma de Barcelona\\
08193 Bellaterra, Catalonia}\\
{francesc@mat.uab.cat}

\vspace{1cm}

\noindent{Tarun Dalal}\\
 {Department of Mathematics, Indian Institute of Technology Hyderabad,\\
  Kandi, Sangareddy 502285, INDIA.}\\
 {ma17resch11005@iith.ac.in}

 \end{document}